\pdfoutput=1 
\documentclass[journal]{IEEEtran}
%

\usepackage{amssymb,epsfig,color,cite,amsmath,amsfonts,mathrsfs,algorithm,algorithmicx}
\usepackage{epstopdf}
\usepackage{datetime,fancyhdr}
\usepackage{algpseudocode}
\usepackage{arydshln}
\usepackage{multirow}
\usepackage{amsthm,bm}
\usepackage{float}
\usepackage{threeparttable}
\usepackage{times} 
\usepackage{epsfig}
\usepackage{graphicx}
\usepackage{latexsym}
\usepackage{subfigure}

\newtheorem{lemma}{Lemma}[section]
\newtheorem{theorem}{Theorem}[section]
\newtheorem{corollary}{Corollary}[section]
\newtheorem{remark}{Remark}[section]

\newtheorem{proposition}{Proposition}[section]
\newtheorem{assumption}{Assumption}[section]

\hyphenation{op-tical net-works semi-conduc-tor}

\begin{document}
%
\title{ Distributed stochastic projection-free solver for constrained optimization}
%
%
%

\author{Xia Jiang,
        Xianlin~Zeng,~\IEEEmembership{Member,~IEEE,}
        Lihua~Xie,~\IEEEmembership{Fellow,~IEEE,}\\
        Jian~Sun,~\IEEEmembership{Member,~IEEE,}
        and ~Jie~Chen,~\IEEEmembership{Fellow,~IEEE} 
\thanks{This work was supported by  the National Natural Science Foundation of China (Nos. 61925303, 62088101, 62073035) and China Scholarship Council 202006030072. \emph{(Corresponding author: Jian Sun.)}}
\thanks{X. Jiang (jiangxia@bit.edu.cn) and J. Sun (sunjian@bit.edu.cn) are with Key Laboratory of Intelligent Control and Decision of Complex Systems, School of Automation, Beijing Institute of Technology, Beijing, 100081, China, and also with the Beijing Institute of Technology Chongqing Innovation Center, Chongqing  401120, China}
\thanks{X. Zeng (xianlin.zeng@bit.edu.cn) is with Key Laboratory of Intelligent Control and Decision of Complex Systems, School of Automation, Beijing Institute of Technology, Beijing, 100081, China}
\thanks{L. Xie (elhxie@ntu.edu.sg) is with the School of Electrical and Electronic Engineering, Nanyang Technological University, Singapore 639798.}
\thanks{J. Chen (chenjie@bit.edu.cn) is with the School of Electronic and Information Engineering, Tongji University, Shanghai, 200082, China, and also with Key Laboratory of Intelligent Control and Decision of Complex Systems, School of Automation, Beijing Institute of Technology, Beijing, 100081, China.}
}

\markboth{IEEE Transactions on Automatic Control,~\today~\currenttime}
{\MakeLowercase{\textit{et al.}}: Distribute method}


\maketitle

\begin{abstract}
 This paper proposes a distributed stochastic projection-free algorithm for large-scale constrained finite-sum optimization whose constraint set is complicated such that the projection onto the constraint set can be expensive. The global cost function is allocated to multiple agents, each of which computes its local stochastic gradients and communicates with its neighbors to solve the global problem. Stochastic gradient methods enable low computational cost, while they are hard and slow to converge due to the variance caused by random sampling. To construct a convergent distributed stochastic projection-free algorithm, this paper incorporates a variance reduction technique and gradient tracking technique in the Frank-Wolfe update. We develop a sampling rule for the variance reduction technique to reduce the variance introduced by stochastic gradients.  Complete and rigorous proofs show that the proposed distributed projection-free algorithm converges with a sublinear convergence rate and enjoys superior complexity guarantees for both convex and non-convex objective functions. By comparative simulations, we demonstrate the convergence and computational efficiency of the proposed algorithm.
\end{abstract}
\begin{IEEEkeywords}
distributed solver, variance reduction, Frank–Wolfe algorithm, stochastic gradient, finite-sum optimization
\end{IEEEkeywords}

%
\IEEEpeerreviewmaketitle

\section{Introduction}
%
%
%
%
Large-scale finite-sum optimization with some special structures, such as low-rank or sparsity constraints, has wide applications in  various data analysis and machine learning tasks \cite{recomm_lowrank} \cite{tensor_de} \cite{pred_control}. Recently, distributed/decentralized finite-sum optimization over multi-agent networks has attracted much interest. 
On the one hand, finite-sum optimization problems deployed on network systems naturally demand decentralized computations, which reduce the computational burden in the center node, especially for large-scale models. On the other hand, it is often expensive to communicate or unallowable to transmit all information to a central processing node in network systems. Hence, the development of  efficient distributed algorithms  for solving the large-scale constrained finite-sum optimization is of great importance.
\par Much recent research effort has been devoted to the design and analysis of distributed optimization algorithms. Most distributed algorithms are built on the average consensus idea to find a solution of an optimization problem \cite{dis_timevary_direct,ZOU2020109060,Shi_extra,dis_fix_op}. To obtain a better convergence with non-diminishing step-sizes, gradient tracking technique is integrated in the design of distributed optimization algorithms \cite{geo_con,Qu_harn,Xu_disg}. If the optimization problems are constrained by convex sets, the existing distributed algorithms fall into two main categories. One category is primal projected gradient descent algorithms \cite{dis_gra_cons,constr_nons_res,proj_longcon}, which are based on the premise that the projection on constraint sets is simple. The second category of algorithms is primal-dual algorithms \cite{con_finite_sum,non_constrained,smooth_Const}, which usually involve constructing a dual optimal set containing the dual optimal variable. Furthermore, for large-scale constrained finite-sum optimization, where the calculation of exact local gradients is expensive, some distributed stochastic algorithms estimate the local gradients by sampling data and use projection operators \cite{Bianchi_prostog,dis_yu_zhan,sto_NNLS_yuan} or primal-dual frameworks \cite{stoc_pd_online,primal_CDC,bianchi_primal} to obtain feasible solutions.
\par Despite of these progress on distributed optimization, there are two main challenges in developing distributed stochastic algorithms for large-scale finite-sum optimization with complex constraints. One challenge is that computing  projections onto constraint sets can be expensive when the constraint set is highly complex. Because of this, the popular projected gradient descent (PGD) algorithms, in which a projection onto the constraint set is applied in each iteration of algorithms, are inefficient and can be even intractable \cite{Reddi_16_nonconvex}.
Another challenge is the variance on gradients of stochastic algorithms, which compute stochastic gradients by sampling data.
Although  stochastic algorithms significantly  reduce computation and storage costs compared with deterministic algorithms, distributed stochastic  algorithms are usually slow and hard to converge due to the variance caused by random sampling and distributed data.

\par To overcome the challenge caused by expensive projection operators of complex sets, distributed Frank-Wolfe algorithms are one of the promising solvers for large-scale constrained finite-sum optimization in recent years. Unlike projected gradient algorithms, Frank-Wolfe (FW) algorithms are projection-free and solve  linear minimization over constraint sets to make variables feasible \cite{fran_wolf_work,block_Fw}. FW algorithms are often significantly less costly than projected gradient algorithms, especially for the unit-norm constraints and nuclear norm constraints in machine learning community, over which linear minimization owns closed-form solutions \cite{hazan_Fw,pmlr-v28-jaggi13}. Although many centralized FW-type algorithms have been developed, distributed FW-type algorithms over multi-agent networks have not been well studied until recent years.
\cite{FW-TAC} developed a decentralized FW-type (DenFW) algorithm and provided the convergence analysis by viewing the decentralized algorithm as an inexact FW algorithm. For the special monotone and continuous DR-submodular maximization, \cite{pmlr-v89-xie19b} developed a distributed FW-type algorithm with a low communication complexity.  To improve the communication efficiency of distributed FW algorithms for a general constrained finite-sum optimization, \cite{Xian_Huang_Huang_2021} proposed a decentralized quantized Frank-Wolfe algorithm, which filled the gap of decentralized quantized constrained optimization. 

\par  Recently, there has been increasing interest \cite{vr-de,comm_de} in  dealing  with the challenges from the variance on stochastic gradients of centralized FW-type methods, which are much younger compared with various stochastic projected gradient methods \cite{MADEIRA2000427,YUAN2018196}.  
For the convex finite-sum optimization, \cite{pmlr-v48-hazana16}  developed stochastic FW algorithms with and without variance reduction techniques.
For the non-convex finite-sum optimization,  \cite{Reddi_16_nonconvex} proposed a stochastic Frank-Wolfe (SFW) method and a stochastic variance reduced variant of FW algorithm (SVFW). To achieve an $\epsilon$-solution, SFW requires $\mathcal{O}(\frac{1}{\epsilon^4})$ incremental first-order oracle (IFO) and SVFW improves the IFO of SFW to $\mathcal{O}(\frac{n^{2/3}}{\epsilon^2})$, where $n$ denotes the total number of  samples. Unlike SFW and SVFW that use unbiased gradient estimators, \cite{pmlr-v97-yurtsever19b} combined a stochastic path-integrated differential estimator technique (SPIDER) \cite{SPIDER-FANG} with the classical FW method to develop a new variant SPIDER-FW so that the IFO is improved to $\mathcal{O}(\frac{n^{1/2}}{\epsilon^2})$. Compared with SVFW, the improvement is significant for a large-scale dataset where $n$ is huge. However, the research of distributed stochastic projection-free algorithms with variance reduction is still absent.  
\par  Inspired by the existing centralized stochastic works \cite{Reddi_16_nonconvex,pmlr-v97-yurtsever19b} and distributed optimization algorithms \cite{Xu_disg,pmlr-v89-xie19b,FW-TAC}, we develop a distributed stochastic projection-free algorithm for large-scale constrained finite-sum optimization.
The contributions of this paper are summarized as follows.
\begin{itemize}
	\item[(1)] We propose a distributed stochastic projection-free algorithm, called DstoFW algorithm, by combining the Frank-Wolfe method and a variance reduction technique. The proposed DstoFW algorithm computes stochastic local gradients and communicates with neighbors only one round per iteration, resulting in less computational burden than the distributed deterministic works. 
	\item[(2)] We design a new periodically reducing sampling rule for the variance reduction step of DstoFW algorithm.  With this sampling rule, we establish that the proposed distributed stochastic algorithm has the same sublinear convergence rates as those of the deterministic algorithm \cite{FW-TAC}, i.e. an $\mathcal O(1/k)$  convergence rate for convex objective functions and an $\mathcal O(1/\sqrt{K})$  convergence rate for non-convex objective functions.  
 	\item[(3)] We provide the complexity analysis for the proposed distributed stochastic FW algorithm. We show that the proposed DstoFW has an incremental first-order oracle complexity of $\mathcal{O}(n^{3/4}\epsilon^{-1})$ for convex finite-sum optimization and an incremental first-order oracle complexity of  $\mathcal{O}(n^{2/3}\epsilon^{-2})$ for non-convex finite-sum optimization. 
\end{itemize}
 \par The remainder of the paper is organized as follows. The problem description and the proposed distributed DstoFW algorithm are given in Section \ref{solver_design}. The convergence results are given in Section \ref{conv_re}. The convergence and complexity analysis of the proposed method are provided in Section \ref{proof_sec}. The efficiency of the distributed algorithm is validated by comparative simulations in Section \ref{simulation} and the conclusion is made in Section \ref{conclusion}.
\subsection{Mathematical notations \& multi-agent graphs}
\par We denote $\mathbb{R}$ as the set of real numbers, $\mathbb{R}^+$ as the set of non-negative numbers, $\mathbb{Z}^+$ as the set of positive integers, $\mathbb{R}^n$ as the set of $n$-dimensional real column vectors, respectively. $\mathbf{1}$ denotes a column vector with all elements of $1$. All vectors in the paper are column vectors, unless otherwise stated. For a real vector $v$, $\left\|v\right\|$ is the Euclidean norm. The binary operator $\langle\cdot,\cdot \rangle$ denotes the inner product. The notation $[n]$ is the abbreviation of the set $\{1,\cdots,n\}$. For a set $S$, $|S|$ denotes the number of elements in the set $S$. The gradient of a differentiable function $f(x)$ is represented by $\nabla f(x)$. The notation ${\rm mod}(a,b)$, where $a,b \in \mathbb{Z}^+$, denotes the modulo operation. For a real number $c\in \mathbb{R}$, the operator $\lceil c \rceil$  returns the smallest integer that is greater than or equal to $c$.
\par The communication of multi-agent network system is modeled as an undirected graph $\mathcal{G}=(\mathcal{V},\mathcal{E},W)$, where $\mathcal{V}$ is a finite nonempty set of nodes, $\mathcal{E}\subset \mathcal{V}\times \mathcal{V}$ is
the corresponding set of edges. The adjacent matrix $W$ satisfies that $W_{ij}=W_{ji}>0$ if $\{i,j\}\in \mathcal{E}$ and the elements $W_{ij}= 0$, otherwise. In addition, the adjacent matrix $W$ is doubly stochastic, i.e., $W\mathbf{1}=W^\top\mathbf{1}=\mathbf{1}$. If the edge $\{i,j\}\in \mathcal{E}$ holds, agent $j$ is called a neighbor of agent $i$ and $\mathcal{N}_i$ is the set of neighbors of agent $i$.

\section{Problem description and algorithm design}\label{solver_design}

In this paper, we aim to solve the following large-scale constrained finite-sum optimization over a multi-agent network
\begin{align}\label{opti_p}
	\min_{x\in \Omega} F(x), \quad F(x)=\frac{1}{m}\sum_{i=1}^m f_i(x), \ f_i(x)= \frac{1}{n_i} \sum_{j=1}^{n_i} f_{i,j}(x),
\end{align}
where $x\in \Omega \subset \mathbb{R}^d$ is the unknown decision variable, $\Omega$ is a compact and convex constraint set, $f_i:\Omega \to \mathbb{R}$ is the local differentiable function of agent $i$, $m$ is the number of agents in the network and $n_i$ is the number of local data samples. In the multi-agent network, each agent $i$ only knows local information $f_{i}$ and communicates with its neighbors over the network $\mathcal{G}$ to solve \eqref{opti_p} cooperatively.

\par In problem \eqref{opti_p}, the volume of local data can be large such that the computation of full local gradients is time-consuming. In addition, the set $\Omega$ can be complex such that the calculation overhead for projection onto $\Omega$ can be heavy. To address these issues, we design a  distributed stochastic Frank-Wolfe algorithm (DstoFW), which is summarized in Algorithm \ref{fw_vr}. In the multi-agent network, each agent $i$ owns a local variable estimate $x_i^k$ at iteration $k$. Algorithm \ref{fw_vr} is composed of the following steps.
\par \textit{Average consensus step}: Each agent takes a weighted average of the variable estimates from its neighbors to approximate the average  $\bar{x}^k\triangleq \frac{1}{m}\sum_{i=1}^m x_i^k$. 
\par \textit{Frank-Wolfe step:} To avoid the complex projection operation, each agent takes a linear minimization over the constraint set $\Omega$ and a convex combination of the minimum $u_i^k$ and $\overline{x_i^k}$ to obtain a feasible variable estimate $x_i^{k+1}$ in the constraint set $\Omega$. The step-size $\gamma_k$ is $\frac{2}{k+1}$ for the convex optimization and $\frac{1}{\sqrt{k}}$ for the non-convex optimization.
\par\textit{ Variance reduction step:} To address the variance on gradient caused by random samples, each agent employs a stochastic path-integrated differential estimator technique to approximate the local gradient. The number of random samples $|S^k|$ satisfies the sampling rule 1 in Section \ref{conv_re}.
\par\textit{ Gradient tracking step:} To estimate the gradient of global function $F$, each agent uses the aggregated gradient $d_i^k$ from the last iteration and communicates with neighbors in \eqref{line_g} and \eqref{line_t}. In addition, variables are initialized as $d_i^1=v_i^1=g_i^1= \nabla f_i(x_i^1)$ for each agent $i$.
\begin{algorithm}
	\caption{Distributed stochastic FW algorithm (DstoFW) on agent $i$}
	\label{fw_vr}
	\begin{algorithmic}[1]
		\State \textbf{Input:} number of iterations $K$, initial condition $x_i^1\in \Omega$, $d_i^1=v_i^1=g_i^1= \nabla f_i(x_i^1)$.
		\For {$k=1,\cdots,K$}
		\State \textit{Average consensus step:} 
		    \begin{align}\label{line_2}
			\overline{x_i^k}=\sum_{j\in \mathcal{N}_i} W_{ij}x_j^k.
			\end{align}
		\State \textit{Frank-Wolfe step}: 
		\begin{align}
			u_i^{k}&={\rm argmin}_{u\in \Omega} \langle u, d_i^{k}\rangle \label{u_up}\\
			x_i^{k+1}&=(1-\gamma_k)\overline{x_i^k}+\gamma_k u_i^{k}\label{xup}
		\end{align}
		\State \textit{Variance reduction step:} 
		\If{$\mod(k+1,q)=0$}
		\begin{align}\label{mod_up}
		v_i^{k+1}=\frac{1}{n_i}\sum_{j=1}^{n_i} \nabla f_{i,j}({x_i^{k+1}})
		\end{align}
		\Else
		\State Select a sample set $S^{k}$ from $[n_i]$ to compute
		\begin{align}\label{samp_up}
			v_i^{k+1}=\frac{1}{|S^{k}|} \sum_{j\in S^{k}} [\nabla f_{i,j}({x_i^{k+1}})-\nabla f_{i,j}({x_i^k})]+v_i^k
		\end{align}
		\EndIf
		\State \textit{Gradient tracking step:} 
		\begin{align} g_i^{k+1}&=d_{i}^{k}+v_{i}^{k+1}-v_i^k\label{line_g}\\
		d_i^{k+1}&=\sum_{j\in \mathcal{N}_i} W_{ij}g_j^{k+1}\label{line_t}
		\end{align}
		\EndFor
	\end{algorithmic}
\end{algorithm}

\begin{remark}
	To our best knowledge, this is the first distributed stochastic projection-free algorithm for constrained optimization. Compared with the recent decentralized deterministic algorithm in \cite{pmlr-v89-xie19b,FW-TAC}, this proposed DstoFW introduces a variance-reduced stochastic technique to avoid the computational burden of full local gradients in large-scale problems. In addition, the proposed variance reduction step helps solve the slow convergence problem of stochastic algorithms caused by the random sampling, which enables the proposed algorithm to have the same convergence rates as deterministic ones. 
\end{remark}
\begin{remark}
	The sample sets in the variance reduction step satisfy a periodically reducing sampling rule, which is given in Section \ref{conv_re}. For convex (non-convex) optimization, we choose a suitable epoch $q=\lfloor n_i^{1/4} \rfloor$ ($q=\lfloor n_i^{1/3} \rfloor$) and the number of each sample set in the epoch satisfies the sampling rule 1. The designed sampling rule ensures the global gradient tracking of the proposed distributed algorithm after introducing the variance-reduced stochastic technique.
\end{remark}

\begin{remark}
	Using the gradient tracking and average consensus idea, this paper extends the existing centralized variance-reduced stochastic methods \cite{Reddi_16_nonconvex,pmlr-v97-yurtsever19b} to a distributed setting and reduces the communication congestion and computational burden of the central processing node. In addition, the proposed DstoFW needs only one communication round per iteration to exchange $x_i^k$ and $g_i^k$, whereas the decentralized algorithm \cite{FW-TAC} requires two communication rounds per iteration. 
\end{remark}
\begin{table*}[h]
	\centering
	\caption{Complexity comparisons of different algorithms}\label{complex_table}
	\begin{tabular}{|c|cc|cc|}
		\hline
		\multirow{2}{*}{Algorithms} & \multicolumn{2}{c|}{convex}      & \multicolumn{2}{c|}{non-convex}  \\ \cline{2-5}
		& \multicolumn{1}{c|}{IFO}  & LO   & \multicolumn{1}{c|}{IFO}  & LO   \\ \hline
		SFW\cite{pmlr-v48-hazana16,Reddi_16_nonconvex}                         & \multicolumn{1}{c|}{$\mathcal{O}(\epsilon^{-3})$} & $\mathcal{O}(\epsilon^{-1})$  & \multicolumn{1}{c|}{$\mathcal{O}(\epsilon^{-4})$} & $\mathcal{O}(\epsilon^{-2})$ \\ \hline
		SVFW \cite{pmlr-v48-hazana16,Reddi_16_nonconvex}                       & \multicolumn{1}{c|}{$\mathcal{O}(mn\ln(\epsilon^{-1})+\epsilon^{-2})$} & $\mathcal{O}(\epsilon^{-1})$ & \multicolumn{1}{c|}{$\mathcal{O}((mn)^{2/3}\epsilon^{-2})$} & $\mathcal{O}(\epsilon^{-2})$ \\ \hline
		SPIDER-FW \cite{pmlr-v97-yurtsever19b}                  & \multicolumn{1}{c|}{$\mathcal{O}(mn\ln(\epsilon^{-1})+\epsilon^{-2})$} & $\mathcal{O}(\epsilon^{-1})$ & \multicolumn{1}{c|}{$\mathcal{O}((mn)^{1/2}\epsilon^{-2})$} & $\mathcal{O}(\epsilon^{-2})$ \\
		\hline
		\textbf{DstoFW }                    & \multicolumn{1}{c|}{$\bm{\mathcal{O}(n^{3/4}\epsilon^{-1})}$} & $\bm{\mathcal{O}(\epsilon^{-1})}$ & \multicolumn{1}{c|}{$\bm{\mathcal{O}(n^{2/3}\epsilon^{-2})}$} & $\bm{\mathcal{O}(\epsilon^{-2})}$ \\ \hline
	\end{tabular}
\end{table*}
\section{Convergence results}\label{conv_re}
\par Now, we provide the convergence results of the proposed DstoFW algorithm. The proofs of all the convergence results will be postponed to next section. For the optimization problem \eqref{opti_p}, we make the following assumption.
\begin{assumption}\label{rho_assum}
	The constraint set $\Omega$ is convex and compact with diameter $\bar{\rho}$, i.e. $\bar{\rho}\triangleq \max_{x, \hat{x}\in \Omega} \|x-\hat{x}\|$.
\end{assumption}
{\textbf{Fact 1:}} Under Assumption \ref{rho_assum}, there exist constants $G, L, C\in \mathbb{R}^+$ such that the cost function $f_i$ is $G$-Lipschitz and $L$-smooth \cite{nest_convex}, and $\|\nabla f_i\|\leq C$.
\par In addition, for the multi-agent network, we assume that the adjacent matrix $W$ satisfies the following assumption.
\begin{assumption}\label{net_assum}
	The multi-agent network is connected and the adjacent matrix $W$ is doubly stochastic with non-negative elements. In addition, the second largest eigenvalue of $W$ is strictly less than one, i.e., $|\lambda_2(W)|<1$.
\end{assumption}
\begin{remark}
	The doubly stochastic adjacent matrix $W$ enables each agent to take a weighted average of its neighbors' information. The doubly stochastic $W$ satisfying $|\lambda_2(W)|<1$ exists for connected undirected and weight-balanced directed graphs, which are common in the distributed optimization \cite{con_finite_sum,smooth_Const,dis_con_wei}.
\end{remark}

\par The first theorem shows that the local variable estimate of each agent converges to the same average $\bar{x}^k$.
\begin{theorem}\label{con_theo}
	Suppose Assumptions \ref{rho_assum} and \ref{net_assum} hold. If $\gamma_k\!=\frac{A}{k^{\alpha}}$ with $\alpha \!\in (0,1]$ and $A\!>\!0$, then $\|x_i^k-\bar{x}^k\|\!=\mathcal{O}(1/k^{\alpha})$.
\end{theorem}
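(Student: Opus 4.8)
The plan is to track the disagreement among the agents' iterates directly and show it obeys a contraction coming from the mixing matrix plus a forcing term of order $\gamma_k$. A useful first observation is that the Frank--Wolfe direction $u_i^k$ enters the update \eqref{xup} only through its membership in $\Omega$, so the consensus analysis decouples entirely from the gradient-tracking recursions \eqref{line_g}--\eqref{line_t}. I would stack the local estimates into a matrix $X^k$ whose $i$-th row is $(x_i^k)^\top$, and likewise $U^k$ for the $u_i^k$, so that the average-consensus and Frank--Wolfe steps \eqref{line_2}--\eqref{xup} read $X^{k+1}=(1-\gamma_k)WX^k+\gamma_k U^k$. Since $W$ is doubly stochastic, left-multiplying by $\tfrac1m\mathbf 1^\top$ gives $\bar x^{k+1}=(1-\gamma_k)\bar x^k+\gamma_k\bar u^k$ with $\bar u^k=\tfrac1m\sum_i u_i^k$, so that the stacked deviation $(I-J)X^k$, with $J=\tfrac1m\mathbf 1\mathbf 1^\top$, has rows exactly $x_i^k-\bar x^k$.

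Next I would derive a scalar recursion for $e_k\triangleq\|(I-J)X^k\|_F$. Using $W\mathbf 1=\mathbf 1=W^\top\mathbf 1$ one has $JW=WJ=J$, whence the key identity $(I-J)W=(W-J)(I-J)$; applying $(I-J)$ to the stacked update yields $(I-J)X^{k+1}=(1-\gamma_k)(W-J)(I-J)X^k+\gamma_k(I-J)U^k$. Because $W$ is symmetric with $\|W-J\|_2=|\lambda_2(W)|=:\sigma<1$ under Assumption \ref{net_assum}, the first term contracts, $\|(W-J)(I-J)X^k\|_F\le\sigma e_k$. For the forcing term, each $u_i^k\in\Omega$ and the average $\bar u^k\in\Omega$ by convexity, so Assumption \ref{rho_assum} gives $\|u_i^k-\bar u^k\|\le\bar\rho$ and hence $\|(I-J)U^k\|_F\le\sqrt m\,\bar\rho$. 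Combining these (and using $\gamma_k\le1$ for $k$ large, so the coefficients are nonnegative) produces $e_{k+1}\le(1-\gamma_k)\sigma\,e_k+\sqrt m\,\bar\rho\,\gamma_k$.

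Finally I would solve this recursion to extract the $\mathcal O(\gamma_k)=\mathcal O(1/k^\alpha)$ rate. Choosing $k_0$ so that $(1-\gamma_k)\sigma\le\bar\sigma:=\tfrac{1+\sigma}{2}<1$ for $k\ge k_0$, I claim $e_k\le M\gamma_k$ holds for a suitable constant $M$ by induction: substituting the ansatz gives $e_{k+1}\le(\bar\sigma M+\sqrt m\,\bar\rho)\gamma_k$, and it suffices that $\bar\sigma M+\sqrt m\,\bar\rho\le M(k/(k+1))^\alpha$. Here the essential estimate is Bernoulli's inequality $(k/(k+1))^\alpha\ge1-\alpha/(k+1)$, valid precisely because $\alpha\in(0,1]$; enlarging $k_0$ so that $\alpha/(k_0+1)\le(1-\bar\sigma)/2$ reduces the requirement to $M\ge2\sqrt m\,\bar\rho/(1-\bar\sigma)$, which also covers the base case after absorbing the finitely many indices $k<k_0$, on which $e_k$ is trivially bounded since the iterates stay in a bounded set. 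As $e_k\ge\|x_i^k-\bar x^k\|$ for every $i$, this yields $\|x_i^k-\bar x^k\|=\mathcal O(1/k^\alpha)$. I expect this last step---propagating the constant through the induction while handling the step-size ratio uniformly in $\alpha$---to be the main technical obstacle; the preceding algebra that reduces the dynamics to the contraction factor $\sigma$ is routine once the identity $(I-J)W=(W-J)(I-J)$ is in hand.
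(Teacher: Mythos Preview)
Your overall strategy is sound and parallels the paper's: both arguments reduce the consensus error to a scalar recursion driven by the spectral gap $\sigma=|\lambda_2(W)|$ and a forcing term of size $\gamma_k\sqrt m\,\bar\rho$ coming from the diameter bound, then close an induction of the form $e_k\le M\gamma_k$. The paper carries out the induction on the mixed iterates $\overline{x_i^k}$ first (Lemma~\ref{maxbarxisub}) and only afterwards passes to $x_i^k$ via a triangle-inequality splitting; your matrix formulation with $(I-J)W=(W-J)(I-J)$ is a cleaner, one-shot version of the same idea.

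There is, however, a concrete slip in your final induction step. You invoke Bernoulli's inequality to assert $(k/(k+1))^\alpha\ge 1-\alpha/(k+1)$ for $\alpha\in(0,1]$, but Bernoulli goes the \emph{other} way in this regime: for $0\le\alpha\le 1$ and $x\in(-1,0)$ one has $(1+x)^\alpha\le 1+\alpha x$, so in fact $(k/(k+1))^\alpha\le 1-\alpha/(k+1)$. (Check $k=1$, $\alpha=\tfrac12$: $0.707<0.75$.) Fortunately the repair is trivial and does not require Bernoulli at all: since $k/(k+1)<1$ and $\alpha\le 1$, one has $(k/(k+1))^\alpha\ge k/(k+1)=1-1/(k+1)$, which is exactly the kind of lower bound you need. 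Enlarging $k_0$ so that $1/(k_0+1)\le(1-\bar\sigma)/2$ then yields $M\ge 2\sqrt m\,\bar\rho/(1-\bar\sigma)$ precisely as you wrote. The paper handles the same ratio by defining $k_0(\alpha)$ in \eqref{k0_def} so that $\sigma\cdot(1+k_0^{-\alpha})\le (k_0/(k_0+1))^\alpha$ and using the monotonicity of $v\mapsto(v/(1+v))^\alpha$; your formulation via $\bar\sigma=(1+\sigma)/2$ is equivalent in spirit and arguably more transparent once the inequality is stated correctly.
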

\par Note that the step-size of the proposed algorithm is diminishing. To provide some concise convergence rates of DstoFW, we develop the following periodically reducing sampling rule about the relationship between sample size and the step-size.
\vspace{0.2cm}\\
\textbf{Sampling Rule 1:}
Take any $\tilde n \in \mathbb{Z}^+$ and $k\in[ (\tilde n-1)q+1, \tilde n q-1)\cap \mathbb{Z}^+\!$. The sample set $S^k$ satisfies $\!\sqrt{|S^{\tilde{n}q-1}|}\!=q$ and
\begin{align}\label{st_ran}
	\sqrt{\frac{1}{|S^{k}|}}\gamma_{k}\leq \sqrt{\frac{1}{|S^{k+1}|}}\gamma_{k+1}.
\end{align} 
\begin{remark}
	Because the step-size $\gamma_k$ is diminishing, the sequence $\{|S^k|\}_{(\tilde{n}-1)q+1}^{\tilde{n}q-1}$ satisfying \eqref{st_ran} is monotonically decreasing.
\end{remark}
\begin{remark}
	We notice that sampling rule 1 holds during the period $[ (\tilde n-1)q+1, \tilde n q-1)$. We can choose a suitable $q$ so that sample size at the beginning of period is tractable. We also provide the choice of $q$ in Corollary \ref{nonconv_cor}  for the optimal complexity of DstoFW. 
   In practice, given any $|S^{\tilde{n}q-1}|=q^2\leq n_i$, the sample size at iterate $k \in \{ (\tilde n-1)q+1,\cdots, \tilde n q-2\}$ satisfying \eqref{st_ran} can be chosen as
  \begin{align}\label{sam_prac}
  	|S^k|=\Big\lceil\frac{\gamma_{k}^2}{\gamma_{\tilde{n}q-1}^2}|S^{\tilde{n}q-1}|\Big\rceil.
  	\end{align}
\end{remark}

\begin{remark}
     With the help of Sampling rule 1, the proposed stochastic DstoFW algorithm achieves the same convergence rates in terms of iteration $k$ as those of the recent deterministic algorithm \cite{FW-TAC}.
\end{remark}
\par If the objective functions in \eqref{opti_p} are convex, the convergence rate of DstoFW is shown as follows.
\begin{theorem}\label{fconvex_theo}
	Suppose all local objective functions $f_i$, $i=1,\cdots,m$, are convex. Set the step size as $\gamma_k=\frac{2}{k+1}$. Under Assumptions \ref{rho_assum}, \ref{net_assum} and Sampling rule 1,
	\begin{align}
		\mathbb{E}[F(\bar{x}^k)]-F(x^*)= \mathcal{O}(\frac{1}{k}),
	\end{align}
 where $x^*$ is an optimal solution to \eqref{opti_p}.
\end{theorem}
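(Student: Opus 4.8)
The plan is to derive the standard Frank--Wolfe descent recursion for the averaged iterate $\bar x^k=\frac1m\sum_i x_i^k$ and then control the two sources of inexactness: the consensus error between the local iterates and their average, and the gradient-tracking/variance-reduction error between the local direction $d_i^k$ and the true global gradient $\nabla F(\bar x^k)$. First I would observe that, since $W$ is doubly stochastic, the consensus step \eqref{line_2} preserves the mean, so that $\bar x^{k+1}=(1-\gamma_k)\bar x^k+\gamma_k\bar u^k$ with $\bar u^k=\frac1m\sum_i u_i^k$, and likewise $\bar d^k=\bar v^k$ for every $k$ by telescoping the tracking update \eqref{line_g}--\eqref{line_t} from the initialization $d_i^1=v_i^1$. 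Using the $L$-smoothness of $F$ (Fact 1) together with $\bar x^{k+1}-\bar x^k=\gamma_k(\bar u^k-\bar x^k)$ and $\|\bar u^k-\bar x^k\|\le\bar\rho$, I obtain
\[
F(\bar x^{k+1})\le F(\bar x^k)+\gamma_k\langle\nabla F(\bar x^k),\bar u^k-\bar x^k\rangle+\tfrac{L\bar\rho^2}{2}\gamma_k^2 .
\]

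Next I would estimate the linear term. Because $u_i^k$ minimizes $\langle\,\cdot\,,d_i^k\rangle$ over $\Omega$ and $x^*\in\Omega$, it holds that $\langle d_i^k,u_i^k-\bar x^k\rangle\le\langle d_i^k,x^*-\bar x^k\rangle$; adding and subtracting $\nabla F(\bar x^k)$ and invoking convexity of $F$ gives $\langle\nabla F(\bar x^k),x^*-\bar x^k\rangle\le F(x^*)-F(\bar x^k)$, while the two mismatch terms collapse to $\langle d_i^k-\nabla F(\bar x^k),x^*-u_i^k\rangle$, which is at most $\bar\rho\,\|d_i^k-\nabla F(\bar x^k)\|$ by Cauchy--Schwarz and $\mathrm{diam}(\Omega)=\bar\rho$. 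Averaging over $i$, writing $h_k:=F(\bar x^k)-F(x^*)$, and taking expectations yields the key recursion
\[
\mathbb E[h_{k+1}]\le(1-\gamma_k)\mathbb E[h_k]+\gamma_k\frac{\bar\rho}{m}\sum_i\mathbb E\|d_i^k-\nabla F(\bar x^k)\|+\tfrac{L\bar\rho^2}{2}\gamma_k^2 .
\]

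The main obstacle is showing that the tracking error $\frac1m\sum_i\mathbb E\|d_i^k-\nabla F(\bar x^k)\|$ is itself $\mathcal O(\gamma_k)$, since only then does the middle term become $\mathcal O(\gamma_k^2)$ and the recursion close. I would split it as $\|d_i^k-\bar d^k\|+\|\bar v^k-\nabla F(\bar x^k)\|$. For the second piece I would bound $\|\bar v^k-\frac1m\sum_i\nabla f_i(x_i^k)\|\le\frac1m\sum_i\|e_i^k\|$ with $e_i^k:=v_i^k-\nabla f_i(x_i^k)$, and $\|\frac1m\sum_i(\nabla f_i(x_i^k)-\nabla f_i(\bar x^k))\|\le\frac{L}{m}\sum_i\|x_i^k-\bar x^k\|=\mathcal O(\gamma_k)$ directly from Theorem \ref{con_theo} with $\alpha=1$. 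The SPIDER error $e_i^k$ is handled by the unbiased-increment identity $\mathbb E[\|e_i^{k+1}\|^2\mid\mathcal F^k]=\|e_i^k\|^2+\mathbb E[\|\xi_i^k\|^2\mid\mathcal F^k]$, where $\mathbb E[\|\xi_i^k\|^2\mid\mathcal F^k]\le\frac{L^2}{|S^k|}\|x_i^{k+1}-x_i^k\|^2$ by $L$-smoothness of each $f_{i,j}$. Resetting $e_i^{k}=0$ at each full-gradient step \eqref{mod_up} and summing the $\mathcal O(q)$ stochastic increments \eqref{samp_up} over an epoch, the monotonicity built into Sampling Rule 1 (which makes $\gamma_l^2/|S^l|$ increasing with endpoint value $\gamma_{\tilde nq-1}^2/q^2$) bounds the accumulated variance by $\mathcal O(\gamma_k^2/q)$, hence $\mathbb E\|e_i^k\|=\mathcal O(\gamma_k)$. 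Here I would also use $\|x_i^{k+1}-x_i^k\|=\mathcal O(\gamma_k)$, obtained by writing $x_i^{k+1}-x_i^k=(1-\gamma_k)(\overline{x_i^k}-x_i^k)+\gamma_k(u_i^k-x_i^k)$ and applying the consensus bound of Theorem \ref{con_theo}.

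Finally, for the first piece $\|d_i^k-\bar d^k\|$ I would write the tracking dynamics in stacked form with $J=\frac1m\mathbf 1\mathbf 1^\top$ and exploit $\|(I-J)Wz\|\le|\lambda_2(W)|\,\|(I-J)z\|$ (Assumption \ref{net_assum}) to get the contraction $\mathbb E\|(I-J)\mathbf d^{k+1}\|\le|\lambda_2(W)|\bigl(\mathbb E\|(I-J)\mathbf d^k\|+\mathbb E\|\mathbf v^{k+1}-\mathbf v^k\|\bigr)$. Since $\mathbb E\|v_i^{k+1}-v_i^k\|=\mathcal O(\gamma_k)$ follows from smoothness, the preceding $\mathcal O(\gamma_k)$ estimates, and $\|x_i^{k+1}-x_i^k\|=\mathcal O(\gamma_k)$, a standard lemma for a strict contraction driven by a slowly decaying input yields $\mathbb E\|(I-J)\mathbf d^k\|=\mathcal O(\gamma_k)$, and therefore $\frac1m\sum_i\mathbb E\|d_i^k-\bar d^k\|=\mathcal O(\gamma_k)$. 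Collecting these estimates, the recursion reduces to $\mathbb E[h_{k+1}]\le(1-\gamma_k)\mathbb E[h_k]+\mathcal O(\gamma_k^2)$, and with $\gamma_k=\frac{2}{k+1}$ a routine induction (choosing the induction constant to dominate the $\mathcal O(\gamma_k^2)$ constant) establishes $\mathbb E[h_k]=\mathcal O(1/k)$.
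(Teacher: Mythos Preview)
Your proposal is correct and follows essentially the same route as the paper: the smoothness descent inequality for $\bar x^k$, the decomposition of $\|d_i^k-\nabla F(\bar x^k)\|$ into a consensus piece, a SPIDER-variance piece, and a gradient-tracking piece (each shown to be $\mathcal O(\gamma_k)$ via Theorem~\ref{con_theo}, Sampling Rule~1, and a $|\lambda_2(W)|$-contraction, respectively), and then closure of the recursion $(1-\gamma_k)\mathbb E[h_k]+\mathcal O(\gamma_k^2)$. The only cosmetic differences are that the paper carries explicit constants through inductions tied to the threshold $k_0(\alpha)$ (Lemma~\ref{maxbarxisub}, Proposition~\ref{usubx_pro}) and closes with Lemma~\ref{phi_des_lem}, whereas you invoke generic contraction and induction lemmas and pick up the slightly sharper factor $\bar\rho$ (instead of $2\bar\rho$) by collapsing the two mismatch terms into $\langle d_i^k-\nabla F(\bar x^k),x^*-u_i^k\rangle$.
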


\par If the objective functions in \eqref{opti_p} are not guaranteed to be convex, we study the convergence of FW-gap \cite{pmlr-v97-yurtsever19b}, which is defined as
\begin{align}\label{fwgapg}
	g_k\triangleq \max_{x\in \Omega} \langle \nabla F(\bar{x}^k),\bar{x}^k-x \rangle.
\end{align}
 It follows from the definition that $g_k\geq 0$  for all $\bar{x}^k\in \Omega$ and $g_k=0$ when the iterate $\bar{x}_k$ is a stationary point to \eqref{opti_p}. The convergence result of DstoFW for possible non-convex objective functions is shown in the following theorem.
\begin{theorem}\label{fnonconvex_theo}
	  Set the step size as $\gamma_k=\frac{1}{k^{\alpha}}$ for some $\alpha \in (0,1]$. Under Assumptions \ref{rho_assum}, \ref{net_assum} and Sampling rule 1, for all $K\geq 2$,
	\begin{align}\label{eg_nonconvex}
		\min_{k\in [\frac{K}{2}+1,K]} \mathbb{E}[g_k]= \mathcal{O}(\frac{1}{K^{\min\{\alpha,1-\alpha\}}}).
	\end{align}
\end{theorem}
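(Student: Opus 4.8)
The plan is to run the standard Frank--Wolfe descent argument on the \emph{averaged} iterate $\bar x^k$ and to absorb every distributed and stochastic defect into a single gradient-mismatch term. First I would start from the $L$-smoothness of $F$ (Fact 1) to obtain the descent inequality $F(\bar x^{k+1}) \le F(\bar x^k) + \langle \nabla F(\bar x^k), \bar x^{k+1}-\bar x^k\rangle + \tfrac{L}{2}\|\bar x^{k+1}-\bar x^k\|^2$. Averaging the Frank--Wolfe update \eqref{xup} over all agents and using that $W$ is doubly stochastic (so $\tfrac1m\sum_i \overline{x_i^k}=\bar x^k$) gives $\bar x^{k+1}-\bar x^k=\gamma_k(\bar u^k-\bar x^k)$, where $\bar u^k\triangleq\tfrac1m\sum_i u_i^k$. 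Since every iterate stays in $\Omega$ by convexity and induction, the quadratic term is bounded by $\tfrac{L\bar\rho^2}{2}\gamma_k^2$.

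The next step is to convert the linear term into the FW-gap. Writing $x_k^\star\triangleq\arg\min_{x\in\Omega}\langle\nabla F(\bar x^k),x\rangle$ so that $g_k=\langle\nabla F(\bar x^k),\bar x^k-x_k^\star\rangle$, I would split $\langle\nabla F(\bar x^k),u_i^k-x_k^\star\rangle$ into $\langle d_i^k,u_i^k-x_k^\star\rangle+\langle\nabla F(\bar x^k)-d_i^k,u_i^k-x_k^\star\rangle$. The first inner product is nonpositive because $u_i^k$ minimizes $\langle d_i^k,\cdot\rangle$ over $\Omega$ and $x_k^\star\in\Omega$; the second is bounded by $\bar\rho\|\nabla F(\bar x^k)-d_i^k\|$ via Cauchy--Schwarz and Assumption~\ref{rho_assum}. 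Averaging over $i$ yields $\langle\nabla F(\bar x^k),\bar u^k-\bar x^k\rangle\le -g_k+\tfrac{\bar\rho}{m}\sum_i\|\nabla F(\bar x^k)-d_i^k\|$, so after rearranging the descent inequality and taking expectations I get the one-step bound $\gamma_k\mathbb{E}[g_k]\le \mathbb{E}[F(\bar x^k)]-\mathbb{E}[F(\bar x^{k+1})]+\tfrac{\bar\rho\gamma_k}{m}\sum_i\mathbb{E}\|\nabla F(\bar x^k)-d_i^k\|+\tfrac{L\bar\rho^2}{2}\gamma_k^2$.

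The heart of the proof — and the main obstacle — is bounding the gradient-tracking error $\mathbb{E}\|\nabla F(\bar x^k)-d_i^k\|$, which decomposes into three coupled pieces. I would first average the tracking recursion \eqref{line_g}--\eqref{line_t} and use the initialization to show $\bar d^k=\bar v^k=\tfrac1m\sum_i v_i^k$, reducing $\|\nabla F(\bar x^k)-\bar d^k\|$ to $\tfrac1m\sum_i(L\|x_i^k-\bar x^k\|+\|v_i^k-\nabla f_i(x_i^k)\|)$. The consensus error $\|x_i^k-\bar x^k\|$ is $\mathcal O(1/k^\alpha)$ by Theorem~\ref{con_theo}; the tracking-consensus error $\|d_i^k-\bar d^k\|$ is handled by an analogous contraction argument driven by $|\lambda_2(W)|<1$ and the $\mathcal O(\gamma_k)$ increments of the $g_i^k$. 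The genuinely new piece is the SPIDER variance $\mathbb{E}\|v_i^k-\nabla f_i(x_i^k)\|$: writing $\epsilon_i^k\triangleq v_i^k-\nabla f_i(x_i^k)$ and using unbiasedness with the tower property, the squared error obeys $\mathbb{E}\|\epsilon_i^{k+1}\|^2\le \mathbb{E}\|\epsilon_i^k\|^2+\tfrac{L^2}{|S^k|}\mathbb{E}\|x_i^{k+1}-x_i^k\|^2$, which resets to zero at each full-gradient iterate \eqref{mod_up}. Summing across one epoch and invoking Sampling Rule~1 — whose monotonicity \eqref{st_ran} makes $\gamma_k^2/|S^k|$ peak at the epoch end where $|S^{\tilde nq-1}|=q^2$ — bounds the accumulated variance by $\mathcal O(\gamma_k^2)$, so altogether $\mathbb{E}\|\nabla F(\bar x^k)-d_i^k\|=\mathcal O(1/k^\alpha)$. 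Verifying the interplay between the epoch-wise reset, the decreasing sample schedule, and the near-constancy of $\gamma_k$ over an epoch is the most delicate bookkeeping.

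Finally, I would substitute this $\mathcal O(1/k^\alpha)$ bound back into the one-step inequality, turning it into $\gamma_k\mathbb{E}[g_k]\le \mathbb{E}[F(\bar x^k)]-\mathbb{E}[F(\bar x^{k+1})]+\mathcal O(\gamma_k^2)$ with $\gamma_k=1/k^\alpha$, and sum over $k\in[\tfrac K2+1,K]$. The $F$ differences telescope to a bounded constant (since $F$ is continuous on the compact $\Omega$), while $\sum_k\gamma_k^2=\mathcal O(K^{1-2\alpha})$ and $\sum_k\gamma_k=\Theta(K^{1-\alpha})$. Lower-bounding the left side by $(\min_k\mathbb{E}[g_k])\sum_k\gamma_k$ and dividing gives $\min_{k\in[\frac K2+1,K]}\mathbb{E}[g_k]=\mathcal O(K^{-(1-\alpha)}+K^{-\alpha})=\mathcal O(1/K^{\min\{\alpha,1-\alpha\}})$, as claimed; the two competing terms come respectively from the telescoped objective gap and the quadratic step-size error, and balancing them at $\alpha=1/2$ recovers the $\mathcal O(1/\sqrt K)$ rate.
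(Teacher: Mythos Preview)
Your proposal is correct and follows essentially the same route as the paper: the paper packages the gradient-mismatch bound $\mathbb{E}\|d_i^k-\nabla F(\bar x^k)\|=\mathcal{O}(\gamma_k)$ into its Proposition~\ref{usubx_pro} (with exactly the three-way split $\bar d^k=\bar v^k$, consensus error, and SPIDER variance you describe) and then derives the one-step inequality $\gamma_k\mathbb{E}[g_k]\le \mathbb{E}[F(\bar x^k)]-\mathbb{E}[F(\bar x^{k+1})]+\mathcal{O}(\gamma_k^2)$ and sums over $[K/2+1,K]$ just as you do. The only cosmetic differences are that the paper picks up a factor $2\bar\rho$ rather than your $\bar\rho$ (it adds and subtracts $\nabla F(\bar x^k)$ twice instead of using $\langle d_i^k,u_i^k-x_k^\star\rangle\le 0$ directly), and it splits the sum $\sum_k 1/k^{2\alpha}$ explicitly into the cases $\alpha\ge 1/2$ (bounded by $\ln 2$) and $\alpha<1/2$ (bounded by $\mathcal{O}(K^{1-2\alpha})$), which you should also do to make your asymptotic $\mathcal{O}(K^{1-2\alpha})$ rigorous for $\alpha\ge 1/2$.
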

\par Next, we further provide the complexity measure of DstoFW for solving convex and non-convex optimization. Consider problem \eqref{opti_p} and the proposed DstoFW algorithm. A solution $\bar{x}^k$ is an \textit{$\epsilon$-accurate solution} if it satisfies $\mathbb{E}[F(\bar{x}^k)]-F(x^*)\leq \epsilon$ for convex objective functions, or it satisfies $\frac{1}{K/2} \sum_{k=K/2+1}^K \mathbb{E}[g_k]\leq \epsilon$ for non-convex objective functions.

\par To measure the complexity of DstoFW, we use the following black-box oracles \cite{Reddi_16_nonconvex,pmlr-v97-yurtsever19b}.
\begin{itemize}
	\item[$\cdot$] Incremental First-Order Oracle (IFO): For a function
	$F(\cdot) = \frac{1}{n}\sum_{i=1}^n f_i(\cdot)$, an IFO takes an index $i \in [n]$ and a point $x \in \mathbb{R}^d$, and returns the pair $(f_i(x), \nabla f_i(x))$. 
	\item[$\cdot$] Linear Optimization Oracle (LO): For a set $\Omega$, an LO takes a direction $d$ and returns ${\rm argmin}_{v\in \Omega} \langle v, d\rangle$.
\end{itemize}
 The IFO complexity (LO complexity)  is defined as the total number of IFO calls (LO calls) made by DstoFW to obtain an $\epsilon$-accurate solution. 

\begin{corollary}\label{nonconv_cor}
	Under Assumptions \ref{rho_assum}, \ref{net_assum} and Sampling rule 1. Let $n\triangleq \max\{n_1,\cdots,n_m\}$.
	\par (1) Convex case: Set the step-size $\gamma_k=\frac{2}{k+1}$ and $\sqrt{|S^{\tilde{n}_kq-1}|}=q=\lfloor n_i^{1/4}\rfloor$. The LO complexity of DstoFW is $\mathcal{O}(\epsilon^{-1})$ and the IFO complexity is
	$\mathcal{O}(\frac{n^{3/4}}{\epsilon})$
	\par (2) Non-convex case:
	 Set the step-size $\gamma_k=\frac{1}{\sqrt{k}}$ and $\sqrt{|S^{\tilde{n}_kq-1}|}=q=\lfloor n_i^{1/3}\rfloor$. The LO complexity of DstoFW is $\mathcal{O}(\epsilon^{-2})$ and the IFO complexity is $\mathcal{O}(\frac{n^{2/3}}{\epsilon^2})$.
\end{corollary}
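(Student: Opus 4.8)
The plan is to convert the iteration-complexity bounds of Theorems~\ref{fconvex_theo} and \ref{fnonconvex_theo} into oracle counts by three steps: fixing the number of iterations $K$ needed for an $\epsilon$-accurate solution, counting linear-minimization calls (one per iteration), and counting incremental-first-order calls arising from the two branches of the variance reduction step. For the LO complexity, I would observe that each iteration invokes the linear minimization \eqref{u_up} exactly once, so after $K$ iterations the number of LO calls is precisely $K$. In the convex case Theorem~\ref{fconvex_theo} gives $\mathbb{E}[F(\bar{x}^k)]-F(x^*)=\mathcal{O}(1/k)$, so imposing $1/K=\mathcal{O}(\epsilon)$ forces $K=\mathcal{O}(\epsilon^{-1})$; in the non-convex case with $\alpha=1/2$ the bound of Theorem~\ref{fnonconvex_theo} (whose proof in fact controls the running average $\frac{1}{K/2}\sum_{k=K/2+1}^{K}\mathbb{E}[g_k]$, which dominates the minimum) is $\mathcal{O}(1/\sqrt{K})$, so $K=\mathcal{O}(\epsilon^{-2})$. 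These yield the claimed LO complexities $\mathcal{O}(\epsilon^{-1})$ and $\mathcal{O}(\epsilon^{-2})$ immediately.

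For the IFO complexity I would partition the $K$ iterations into $\lceil K/q\rceil$ epochs of length $q$. Within each epoch exactly one iteration (when ${\rm mod}(k+1,q)=0$) triggers the full-gradient branch \eqref{mod_up} at a cost of $n$ IFO calls with $n=\max_i n_i$, while the remaining $q-1$ iterations use the stochastic branch \eqref{samp_up}, each costing $2|S^k|$ calls since $\nabla f_{i,j}$ is evaluated at both $x_i^{k+1}$ and $x_i^k$. Summing the full-gradient contributions gives $\lceil K/q\rceil\cdot n=\mathcal{O}(Kn/q)$. For the stochastic contributions, I would bound the per-epoch sum $\sum_k|S^k|$ via the explicit choice \eqref{sam_prac}: writing $|S^k|\le \frac{\gamma_k^2}{\gamma_{\tilde{n}q-1}^2}q^2+1$ with anchor $|S^{\tilde{n}q-1}|=q^2$, the ratio $\gamma_k^2/\gamma_{\tilde{n}q-1}^2$ is slowly varying across a window of width $q$ near $k\approx\tilde{n}q$, so a direct integral comparison of $\sum_k\gamma_k^2/\gamma_{\tilde{n}q-1}^2$ over the epoch gives $\mathcal{O}(q)$ for every epoch with index $\tilde{n}\ge 2$. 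Hence the per-epoch stochastic cost is $\mathcal{O}(q^3)$ and the total stochastic cost is $\mathcal{O}((K/q)\,q^3)=\mathcal{O}(Kq^2)$.

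Finally I would substitute the prescribed epoch lengths. In the convex case $q=\lfloor n^{1/4}\rfloor$ gives full-gradient cost $\mathcal{O}(Kn/q)=\mathcal{O}(Kn^{3/4})$ and stochastic cost $\mathcal{O}(Kq^2)=\mathcal{O}(Kn^{1/2})$; the former dominates, so with $K=\mathcal{O}(\epsilon^{-1})$ the IFO complexity is $\mathcal{O}(n^{3/4}\epsilon^{-1})$. In the non-convex case $q=\lfloor n^{1/3}\rfloor$ makes both costs of order $Kn^{2/3}$, so with $K=\mathcal{O}(\epsilon^{-2})$ the IFO complexity is $\mathcal{O}(n^{2/3}\epsilon^{-2})$.

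The hard part will be the per-epoch stochastic sum: the ratio $\gamma_k^2/\gamma_{\tilde{n}q-1}^2$ is only close to one for large $\tilde{n}$, whereas in the first epoch ($\tilde{n}=1$, small $k$) the step-size is large and the prescribed sample sizes inflate to order $n$. I would handle this by verifying that the first epoch, together with the initialization $d_i^1=v_i^1=g_i^1=\nabla f_i(x_i^1)$, contributes only a one-time $\mathcal{O}(n)$ cost that is absorbed into the stated bounds in the small-$\epsilon$ regime, and that the ceiling in \eqref{sam_prac} adds only the lower-order $q-1$ per epoch and can be discarded.
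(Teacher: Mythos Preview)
Your proposal is correct and follows essentially the same strategy as the paper: fix $K$ from Theorems~\ref{fconvex_theo} and~\ref{fnonconvex_theo}, count one LO per iteration, and split the IFO count into full-gradient epochs ($\mathcal{O}(Kn/q)$) and stochastic epochs. The only tactical difference is how the per-epoch stochastic sum is bounded: the paper simply upper-bounds every $|S^k|$ in an epoch by the first (largest) one, showing via \eqref{sam_prac} that $|S^{(\tilde n-1)q+1}|\le 4|S^{\tilde nq-1}|=4q^2$ for $\tilde n\ge 2$, and then multiplies by the epoch length $q$ to obtain the same $\mathcal{O}(q^3)$ per-epoch cost you reach by integral comparison; for the first epoch the paper's crude bound gives $\mathcal{O}(n^{5/4})$ (convex) or $\mathcal{O}(n^{4/3})$ (non-convex) rather than your sharper $\mathcal{O}(n)$, but both are one-time constants absorbed in the leading $\mathcal{O}(n^{3/4}\epsilon^{-1})$ or $\mathcal{O}(n^{2/3}\epsilon^{-2})$ term.
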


\begin{remark}
	The choice of $q$  for convex (non-convex) optimization depends on both the number of local data and the sampling rule. For the convex case, since  $\sqrt{|S^{\tilde{n}q-1}|}=q$ in Sampling rule 1, and in view of \eqref{s_big2} and $|S^k|\leq n_i$ for all $k$, the choice of $q$ needs to satisfy $0<q\leq n_i^{1/4}$. In addition, the IFO complexity of the proposed algorithm is inversely proportional to $q$ by the theoretical proof, then, we obtain the optimal choice of $q$ to be $\lfloor n_i^{1/4}\rfloor$. The similar analysis holds for the non-convex case. 
\end{remark}
	\begin{remark}
	We compare the IFO and LO complexity of each agent in the proposed DstoFW algorithm with those of some existing centralized stochastic algorithms, which are summarized in Table \ref{complex_table}. Each agent in the proposed distributed DstoFW algorithm owns a better IFO complexity for convex finite-sum optimization, especially when  $\epsilon$ is small. For non-convex finite-sum optimization, the IFO complexity of  DstoFW is better than SFW and SVFW, but worse than SPDIER-FW when the number of local data $n$ is large. However, this drawback can be handled by using more agents in the distributed DstoFW algorithm to decrease the number of local data. 
	\end{remark}
\begin{table}
	\centering
	\caption{ Convergence rates of different algorithms}\label{rate_table}
	\begin{threeparttable}
	\begin{tabular}{|c|c|c|}
		\hline
		{Algorithms} & {convex} & \multicolumn{1}{l|}{non-convex} \\ \hline
		SVFW   \cite{pmlr-v48-hazana16,Reddi_16_nonconvex}                           & $\mathcal{O}(1/2^k)$                                                    & $\mathcal{O}(1/\sqrt{K})$                             \\ \hline
		SPIDER-FW\cite{pmlr-v97-yurtsever19b}                  & $\mathcal{O}(1/(2^k+t))$\tnote{1}                                                    & $\mathcal{O}(1/\sqrt{K})$                               \\ \hline
		\textbf{DstoFW}                           & $\bm{\mathcal{O}(1/k)}$                                                    & $\bm{\mathcal{O}(1/\sqrt{K})}$                             \\ \hline
	\end{tabular}
	\begin{tablenotes}
		\footnotesize
		\item[1] $t$ is the inner iterates of SPIDER-FW 
	\end{tablenotes}
\end{threeparttable}
\end{table}
	\begin{remark}
We further provide the comparison of convergence rates of the distributed DstoFW algorithm and some existing centralized stochastic Frank-Wolfe algorithms in Table \ref{rate_table}. To our best knowledge, there is no distributed stochastic Frank-Wolfe algorithm for finite-sum optimization except this work. It is observed that for non-convex finite-sum optimization, the proposed distributed DstoFW algorithm owns a same convergence rate as the existing centralized stochastic algorithms. Whereas, for convex finite-sum optimization, the distributed DstoFW algorithm owns a slower convergence rate than the centralized works, which is common in distributed algorithms.
\end{remark}
\begin{remark} 
{It is worth noting that for convex finite-sum optimization, recent centralized stochastic algorithms often requires more samples and IFO calls than that of DstoFW at each iteration $k$. For example, SVFW \cite{pmlr-v48-hazana16} requires $\mathcal{O}(4^k)$ IFO calls and SPIDER-FW \cite{pmlr-v97-yurtsever19b} requires $2^{2k-2}$ IFO calls, while DstoFW only requires $\mathcal{O}(1)$ IFO calls. Thanks to Sampling rule 1 and a suitable choice of $q$, the proposed distributed DstoFW owns a lower IFO complexity than the centralized stochastic algorithms despite the lower convergence rate for convex finite-sum optimization.}
	\end{remark}
\section{theoretical analysis}\label{proof_sec}
In this section, we present the theoretical proofs for the convergence and complexity performance of the proposed algorithm, stated in the theorems and corollary in section \ref{conv_re}. For the convenience of analysis, we define  some auxiliary quantities  $\bar{d}^k\triangleq\frac{1}{m}\sum_{i=1}^m d_i^k$, $\bar{g}^k\triangleq\frac{1}{m}\sum_{i=1}^m g_i^k$, $\bar{v}^k\triangleq\frac{1}{m}\sum_{i=1}^m v_i^k$, and $\overline{\nabla_k F}\triangleq \frac{1}{m}\sum_{i=1}^m \nabla f_i({x_i^k})$
\subsection{Consensus analysis}
\par In Algorithm \ref{fw_vr}, each agent takes a weighted average of the variable estimates from its neighbors in \eqref{line_2}. We firstly state one well-known fact for this average consensus update in the following proposition.
\begin{proposition}\cite[Fact 1]{FW-TAC}\label{barsumpro}
	Suppose Assumption \ref{net_assum} holds. Let $x_1,\cdots,x_m\in \mathbb{R}^d$ be variable estimates and $\bar{x}=\frac{1}{m}\sum_{i=1}^m x_i$ be their average. The update $\overline{x_i}=\sum_{j=1}^m W_{ij}x_j$ satisfies
	\begin{align}
		\sqrt{\sum_{i=1}^m \|\overline{x_i}-\bar{x}\|^2}\leq |\lambda_2(W)|\sqrt{\sum_{i=1}^m\|x_i-\bar{x}\|^2},
	\end{align}
where $\lambda_2(W)$ is the second largest eigenvalue of the adjacent matrix $W$.
\end{proposition}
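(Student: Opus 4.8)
The plan is to lift the per-agent inequality into a single matrix inequality and then reduce everything to a spectral-norm bound on $W$ restricted to the ``disagreement'' subspace. First I would stack the agents' estimates into a matrix $X\in\mathbb{R}^{m\times d}$ whose $i$-th row is $x_i^\top$. Then the consensus update produces the matrix $WX$ (its $i$-th row is $\overline{x_i}^\top$), and the replicated average is $JX$, where $J\triangleq\frac{1}{m}\mathbf{1}\mathbf{1}^\top$ is the orthogonal projector onto $\mathrm{span}(\mathbf{1})$; indeed $JX$ has every row equal to $\bar{x}^\top$. With the Frobenius norm $\|\cdot\|_F$ the two sides of the claim become $\sum_{i=1}^m\|\overline{x_i}-\bar{x}\|^2=\|WX-JX\|_F^2$ and $\sum_{i=1}^m\|x_i-\bar{x}\|^2=\|(I-J)X\|_F^2$, so it suffices to prove $\|WX-JX\|_F\leq|\lambda_2(W)|\,\|(I-J)X\|_F$.

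The next step uses the double stochasticity of $W$. Because $W\mathbf{1}=\mathbf{1}$ and $\mathbf{1}^\top W=\mathbf{1}^\top$, one has $WJ=JW=J$, and since $J$ is a projector, $J^2=J$. These give the identity $(W-J)(I-J)=W-WJ-J+J^2=W-J$, hence $WX-JX=(W-J)X=(W-J)(I-J)X$. Applying submultiplicativity of the operator norm $\|\cdot\|_2$ against the Frobenius norm then yields $\|WX-JX\|_F\leq\|W-J\|_2\,\|(I-J)X\|_F$. The remaining task is to show $\|W-J\|_2=|\lambda_2(W)|$.

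This last identity is the crux, and is where I expect to spend the most care. Since the graph is undirected, $W$ is symmetric, so it admits an orthonormal eigenbasis $q_1,\dots,q_m$ with real eigenvalues; by double stochasticity and connectedness the Perron eigenvalue is $\lambda_1=1$ with $q_1=\mathbf{1}/\sqrt{m}$, and $J=q_1q_1^\top$. Subtracting $J$ annihilates exactly this top mode while leaving the others untouched: $(W-J)q_1=W\mathbf{1}/\sqrt m-\mathbf{1}/\sqrt m=0$, and for $j\geq2$ one has $Jq_j=q_1(q_1^\top q_j)=0$, so $(W-J)q_j=\lambda_j q_j$. Thus $W-J$ is symmetric with eigenvalues $\{0\}\cup\{\lambda_j\}_{j\geq2}$, and its operator norm equals its spectral radius $\max_{j\geq2}|\lambda_j|=|\lambda_2(W)|$, under the convention of Assumption \ref{net_assum} that $|\lambda_2(W)|$ denotes the largest magnitude among the non-Perron eigenvalues. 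Substituting back and taking square roots of both sides gives the claimed contraction. The only genuine subtlety is this spectral step, which hinges on the symmetry of $W$ (guaranteeing that the operator norm equals the spectral radius) and on $J$ removing precisely the eigenvalue-one mode; the rest is bookkeeping with the Frobenius norm.
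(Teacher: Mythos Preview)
Your argument is correct and is the standard spectral proof of this consensus contraction. Note, however, that the paper does not actually prove this proposition: it is quoted verbatim as \cite[Fact 1]{FW-TAC} and used as a black box, so there is no ``paper's own proof'' to compare against. Your write-up supplies exactly the missing justification, and the one caveat you raise---that $|\lambda_2(W)|$ must be read as $\max_{j\geq 2}|\lambda_j(W)|$ rather than the literal second-largest signed eigenvalue---is the right thing to flag, since the inequality as stated requires the former interpretation.
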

\par For some $\alpha \in (0,1]$, we define $k_0(\alpha)$ as the smallest positive integer such that
\begin{align}\label{k0_def}
	\lambda_2(W)\leq \Big(\frac{k_0(\alpha)}{k_0(\alpha)+1}\Big)^{\alpha}\frac{1}{1+(k_0(\alpha))^{-\alpha}}.
\end{align}
Then, we show that for each agent $i$, the local approximation of average estimate $\overline{x_i^k}$ tracks the average estimate $\bar{x}^k$ with an approximation error. 
\begin{lemma}\label{maxbarxisub}
	Suppose Assumptions \ref{rho_assum} and  \ref{net_assum} hold. Set the step-size $\gamma_k=k^{-\alpha}$ for any $\alpha\in (0,1]$. We have
	\begin{align}\label{xsubbax}
		\max_{i\in[m]}\|\overline{x_i^k}-\bar{x}^k\|\leq C_p \gamma_k, \ \forall k\geq 1,
	\end{align}
where $C_p\triangleq (k_0(\alpha))^{\alpha}\sqrt{m}\bar{\rho}$.
\end{lemma}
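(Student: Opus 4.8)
The plan is to pass from the per-agent quantity to the $\ell_2$-aggregated consensus errors and run an induction whose inductive step is controlled \emph{exactly} by the defining inequality \eqref{k0_def} of $k_0(\alpha)$. Write $\lambda\triangleq|\lambda_2(W)|$ and set $a_k\triangleq\sqrt{\sum_{i=1}^m\|\overline{x_i^k}-\bar{x}^k\|^2}$ and $e_k\triangleq\sqrt{\sum_{i=1}^m\|x_i^k-\bar{x}^k\|^2}$. Since $\max_{i\in[m]}\|\overline{x_i^k}-\bar{x}^k\|\le a_k$, it suffices to establish $a_k\le C_p\gamma_k$ for every $k\ge 1$.

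First I would record two elementary facts. Because the Frank--Wolfe update \eqref{xup} is a convex combination of points of the convex set $\Omega$, every iterate $x_i^k$ stays in $\Omega$, so $\|x_i^k-\bar{x}^k\|\le\bar{\rho}$ and hence $e_k\le\sqrt{m}\bar{\rho}$ for all $k$. Second, averaging \eqref{xup} and using that $W$ is doubly stochastic (so that $\frac1m\sum_i\overline{x_i^k}=\bar{x}^k$) yields $\bar{x}^{k+1}=(1-\gamma_k)\bar{x}^k+\gamma_k\bar{u}^k$ with $\bar{u}^k\triangleq\frac1m\sum_i u_i^k$, whence $x_i^{k+1}-\bar{x}^{k+1}=(1-\gamma_k)(\overline{x_i^k}-\bar{x}^k)+\gamma_k(u_i^k-\bar{u}^k)$. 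Applying Minkowski's inequality over the agents, then Proposition \ref{barsumpro} to the leading term, and finally $u_i^k,\bar{u}^k\in\Omega$ (so $\|u_i^k-\bar{u}^k\|\le\bar{\rho}$), together with $0\le 1-\gamma_k\le 1$, gives the one-step recursion
\begin{align}
a_{k+1}\le\lambda e_{k+1}\le\lambda\big[(1-\gamma_k)a_k+\gamma_k\sqrt{m}\bar{\rho}\big]\le\lambda a_k+\lambda\gamma_k\sqrt{m}\bar{\rho}.
\end{align}

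I would then prove $a_k\le C_p\gamma_k$ in two regimes. For $1\le k\le k_0(\alpha)$ the claim is immediate: $\gamma_k\ge\gamma_{k_0}=(k_0(\alpha))^{-\alpha}$ forces $C_p\gamma_k\ge\sqrt{m}\bar{\rho}\ge a_k$, the last step using $a_k\le\lambda e_k\le\lambda\sqrt{m}\bar{\rho}$; this also supplies the base case $k=k_0(\alpha)$. For the inductive step with $k\ge k_0(\alpha)$, assuming $a_k\le C_p\gamma_k$, the recursion gives $a_{k+1}\le\lambda\gamma_k(C_p+\sqrt{m}\bar{\rho})$, and the target $a_{k+1}\le C_p\gamma_{k+1}$ reduces, after dividing by $C_p\gamma_k$ and using $\sqrt{m}\bar{\rho}/C_p=(k_0(\alpha))^{-\alpha}$, to the single scalar inequality $\lambda\big(1+(k_0(\alpha))^{-\alpha}\big)\le(\tfrac{k}{k+1})^{\alpha}$.

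The main obstacle --- and the whole reason for the particular shape of \eqref{k0_def} --- is closing this last inequality uniformly for all $k\ge k_0(\alpha)$. I would dispatch it by monotonicity: the map $k\mapsto(\tfrac{k}{k+1})^{\alpha}$ is increasing, so $(\tfrac{k}{k+1})^{\alpha}\ge(\tfrac{k_0(\alpha)}{k_0(\alpha)+1})^{\alpha}$ for every $k\ge k_0(\alpha)$, while \eqref{k0_def} is precisely $\lambda\big(1+(k_0(\alpha))^{-\alpha}\big)\le(\tfrac{k_0(\alpha)}{k_0(\alpha)+1})^{\alpha}$. Chaining these two facts closes the induction, yielding $a_k\le C_p\gamma_k$ and therefore $\max_{i\in[m]}\|\overline{x_i^k}-\bar{x}^k\|\le C_p\gamma_k$ for all $k\ge 1$, which is \eqref{xsubbax}.
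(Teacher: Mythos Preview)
Your proof is correct and follows essentially the same route as the paper's: both pass to the aggregate $a_k=\sqrt{\sum_i\|\overline{x_i^k}-\bar{x}^k\|^2}$, verify the base case for $k\le k_0(\alpha)$ from the diameter bound, and close the induction for $k\ge k_0(\alpha)$ via the contraction from Proposition~\ref{barsumpro} together with the monotonicity of $k\mapsto(k/(k+1))^{\alpha}$ and the defining inequality~\eqref{k0_def}. The only cosmetic difference is that you invoke Minkowski's inequality to get $e_{k+1}\le(1-\gamma_k)a_k+\gamma_k\sqrt{m}\bar\rho$, whereas the paper expands the square and uses $\sum_i|c_i|\le\sqrt{m}\sqrt{\sum_i c_i^2}$; both arrive at the identical bound $a_{k+1}\le\lambda\gamma_k(C_p+\sqrt{m}\bar\rho)$.
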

\begin{proof}
	The proof is given in Section \ref{proof_lem}.
\end{proof}
It follows from \eqref{xsubbax} that the approximation error $C_p \gamma_k$ is diminishing with the iteration increasing. Now, with Lemma \ref{maxbarxisub} and the boundedness of $\Omega$, we are ready to prove the consensus performance of the proposed DstoFW algorithm.
\par \textbf{Proof of Theorem \ref{con_theo}:}
\begin{proof}
	Using the triangle inequality, we have
	\begin{align*}
		\|x_i^{k+1}\!-\!\bar{x}^{k+1}\|\leq \!\|x_i^{k+1}\!-\!\overline{x_i^k}\|\!+\!\|\overline{x_i^k}\!-\!\overline{x_i^{k+1}}\|\!+\!\|\overline{x_i^{k+1}}\!-\!\bar{x}^{k+1}\|.
	\end{align*}
	Then, we consider the three terms on the right-hand side of the above inequality, respectively. Because of \eqref{xup} and the boundedness of $\Omega$,
	\begin{align}\label{fir_ter}
		\|x_i^{k+1}-\overline{x_i^k}\|=&\|\gamma_k(u_i^k-\overline{x_i^k})\|
		\leq \gamma_k \bar{\rho}.
	\end{align}
	In addition, it follows from \eqref{xsubbax} and \eqref{fir_ter} that
	\begin{align}\label{barxkisub}
		\|\overline{x_{i}^{k+1}}\!-\!\overline{x_i^{k}}\|&=\Big\|\sum_{j=1}^m W_{ij}\big((x_j^{k+1}-\overline{x_j^{k}})+(\overline{x_j^{k}}-\overline{x_i^{k}})\big)\Big\|\notag\\
		&\leq \! \sum_{j=1}^m W_{ij}\big(\|x_j^{k+1}\!-\!\overline{x_j^{k}}\|\!+\!\|\overline{x_j^{k}}\!-\!\bar{x}^k\|\!+\!\|\overline{x_i^{k}}\!-\!\bar{x}^k\|\big)\notag\\
		&\leq \! \sum_{j=1}^m W_{ij}\big(\bar{\rho}\gamma_{k}+2 C_p\gamma_{k}\big)\notag\\
		&=(\bar{\rho}+2C_p)\gamma_{k},
	\end{align}
	where the last equality holds due to the doubly stochasticity of $W$. Since $\gamma_{k+1}\leq \gamma_{k}$ and in view of \eqref{xsubbax}, $\|\overline{x_i^{k+1}}-\bar{x}^{k+1}\|\leq C_p \gamma_{k}$.
	To sum up, we obtain 
	\begin{align}\label{xixsub}
		\|x_i^{k+1}-\bar{x}^{k+1}\|\leq&\bar{\rho}\gamma_k+(\bar{\rho}+2C_p)\gamma_{k}+C_p \gamma_{k}\notag\\
		=&(2\bar{\rho}+3C_p) \gamma_k\notag\\
		\leq & (2\bar{\rho}+3C_p)2^{\alpha} \gamma_{k+1},
	\end{align}
where the last inequality holds because
	\begin{align}\label{gammaksubk}
	\gamma_{k-1}=\gamma_k (\frac{k}{k-1})^{\alpha}=\gamma_k (1+\frac{1}{k-1})^{\alpha}
	\leq  \gamma_k 2^{\alpha}.
	\end{align}
This implies the desired result that $\|x_i^k-\bar{x}^k\|=\mathcal{O}(1/k^{\alpha})$.  
\end{proof}
\subsection{Convergence rate analysis}
Using the upper bound of $\|\overline{x_i^k}-\bar{x}^k\|$ in Lemma \ref{maxbarxisub}, we next discuss that for each agent $i$, the gradient estimate $d_i^k$ tracks the average $\bar{d}^k$ and the average $\bar{d}^k$ tracks the gradient $\overline{\nabla_k F}$ with an approximation error in the following proposition. 
\begin{proposition}\label{usubx_pro}
	Suppose Assumptions \ref{rho_assum} and \ref{net_assum} hold. Set the step size $\gamma_k=k^{-\alpha}$ and $C_p\triangleq (k_0(\alpha))^{\alpha}\sqrt{m}\bar{\rho}$, where $\alpha\in (0,1]$.
	Then, with Sampling rule 1, we have $\bar{d}^{k}=\bar{g}^k=\bar{v}^k$,
	\begin{align}\label{dusbf} \mathbb{E}[\|\bar{d}^k-\overline{\nabla_k F}\|]\leq L (5\bar{\rho}+7C_p) 2^{\alpha}\gamma_k,
	\end{align}
	\begin{align}\label{dibard}
		\mathbb{E}\Big[\sum_{i=1}^m \|d_i^k-\bar{d}^k\|^2\Big]\leq  C_g^2\gamma_k^2,
	\end{align}
	where  $C_g\triangleq k_0(\alpha)^{\alpha}\Big(2mL(5\bar{\rho}+7C_p)+\big(4m3^{k_0(\alpha)}C^2+12m3^{k_0(\alpha)}\mathbb{C}\big)^{1/2}\Big)$ and $\mathbb{C}\triangleq 2L^2 (5\bar{\rho}+7C_p)^2+ 2C^2$.
\end{proposition}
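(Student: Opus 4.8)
The three assertions build on one another, so I would prove them in the stated order.

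\emph{The consensus identities $\bar d^k=\bar g^k=\bar v^k$.} These I would obtain by induction on $k$. The base case is immediate from the initialization $d_i^1=v_i^1=g_i^1=\nabla f_i(x_i^1)$. For the inductive step, averaging the tracking update \eqref{line_t} and using that $W$ is doubly stochastic (so $\sum_i W_{ij}=1$) gives $\bar d^{k+1}=\frac1m\sum_j g_j^{k+1}=\bar g^{k+1}$; averaging \eqref{line_g} gives $\bar g^{k+1}=\bar d^k+\bar v^{k+1}-\bar v^k$, and substituting the hypothesis $\bar d^k=\bar v^k$ collapses this to $\bar g^{k+1}=\bar v^{k+1}$. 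This part is purely algebraic and carries no stochastic content.

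\emph{The mean-gradient tracking bound \eqref{dusbf}.} Writing $e_i^k:=v_i^k-\nabla f_i(x_i^k)$ and $\bar e^k:=\frac1m\sum_i e_i^k=\bar v^k-\overline{\nabla_k F}$ (using $\bar d^k=\bar v^k$), I would first observe that within an epoch $\{(\tilde n-1)q,\dots,\tilde nq-1\}$ the error telescopes, $e_i^{k+1}=e_i^k+\delta_i^k$, where $\delta_i^k$ is the zero-mean sampling noise of \eqref{samp_up} and $e_i^{(\tilde n-1)q}=0$ since the full-gradient step \eqref{mod_up} resets the estimator. Hence $\bar e^k=\sum_l\bar\delta^l$ is a sum of martingale increments ($\mathbb E[\bar\delta^l\mid\mathcal F_l]=0$ under the sampling filtration), the cross terms vanish, and $\mathbb E\|\bar e^k\|^2=\sum_l\mathbb E\|\bar\delta^l\|^2$. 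Bounding the per-sample variance by $L$-smoothness (Fact 1) gives $\mathbb E\|\delta_i^l\|^2\le\frac{L^2}{|S^l|}\|x_i^{l+1}-x_i^l\|^2$, while the one-step drift $\|x_i^{l+1}-x_i^l\|\le(5\bar\rho+7C_p)2^\alpha\gamma_l$ follows from the triangle inequality combined with \eqref{xixsub} and Lemma \ref{maxbarxisub}. The decisive ingredient is Sampling rule 1: monotonicity \eqref{st_ran} of $\gamma_l^2/|S^l|$ together with $|S^{\tilde nq-1}|=q^2$ and at most $q$ terms per epoch yields $\sum_l\gamma_l^2/|S^l|\le\gamma_{\tilde nq-1}^2/q\le\gamma_k^2/q$. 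Averaging over the $m$ agents and applying Jensen's inequality $\mathbb E\|\bar e^k\|\le(\mathbb E\|\bar e^k\|^2)^{1/2}$ then gives \eqref{dusbf}.

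\emph{The gradient-consensus bound \eqref{dibard}.} Set $R_k:=(\sum_i\|d_i^k-\bar d^k\|^2)^{1/2}$. Because $\bar d^{k+1}=\bar g^{k+1}$, Proposition \ref{barsumpro} applied to \eqref{line_t} gives the pathwise contraction $R_{k+1}\le|\lambda_2(W)|(\sum_i\|g_i^{k+1}-\bar g^{k+1}\|^2)^{1/2}$. Writing $g_i^{k+1}-\bar g^{k+1}=(d_i^k-\bar d^k)+(p_i^k-\bar p^k)$ with $p_i^k:=v_i^{k+1}-v_i^k$---crucially grouping the \emph{difference} $v_i^{k+1}-v_i^k$ rather than the individual estimators, which do not reach consensus---the Euclidean triangle inequality on the stacked vectors followed by Minkowski's inequality for $\|\cdot\|_{L^2}:=(\mathbb E\|\cdot\|^2)^{1/2}$ yields the scalar recursion $\|R_{k+1}\|_{L^2}\le|\lambda_2(W)|(\|R_k\|_{L^2}+\|P_k\|_{L^2})$, where $P_k:=(\sum_i\|p_i^k\|^2)^{1/2}$. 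The source term is controlled exactly as above: in a sampling step $p_i^k=[\nabla f_i(x_i^{k+1})-\nabla f_i(x_i^k)]+\delta_i^k$, and in a full-gradient step $p_i^k=[\nabla f_i(x_i^{k+1})-\nabla f_i(x_i^k)]-e_i^k$, both $\mathcal O(\gamma_k)$ in $L^2$ by smoothness, the drift bound, and the $\|e_i^k\|$ estimate, so $\|P_k\|_{L^2}\le\beta\gamma_k$ with $\beta=\mathcal O(\sqrt m\,L(5\bar\rho+7C_p))$.

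\emph{Solving the recursion---the main obstacle.} The delicate remaining step is to turn $\|R_{k+1}\|_{L^2}\le|\lambda_2(W)|(\|R_k\|_{L^2}+\beta\gamma_k)$ into $\|R_k\|_{L^2}\le C_g\gamma_k$ for \emph{all} $k\ge1$, which I would do by induction. For $k\ge k_0(\alpha)$ the defining inequality \eqref{k0_def} guarantees $|\lambda_2(W)|(1+k_0(\alpha)^{-\alpha})\le(k/(k+1))^\alpha=\gamma_{k+1}/\gamma_k$, which is precisely what closes the inductive step once $C_g\ge k_0(\alpha)^\alpha\beta$; this produces the first summand of $C_g$. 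The pre-contraction transient $k\le k_0(\alpha)$, where $\gamma_k$ is not yet small, must instead be bounded crudely: unrolling the contraction with a three-term split $\|g_i^{k+1}-\bar g^{k+1}\|^2\le3(\|d_i^k-\bar d^k\|^2+\|v_i^{k+1}-\bar v^{k+1}\|^2+\|v_i^k-\bar v^k\|^2)$ over at most $k_0(\alpha)$ steps, and bounding the estimator-consensus terms through $\mathbb C$ and through $\|\nabla f_i\|\le C$, accounts for the factor $3^{k_0(\alpha)}$ and the second summand of $C_g$. Tracking how the stochastic source terms, the full-gradient resets, and this initial transient combine without destroying the $\gamma_k$ rate is the principal difficulty; everything else reduces to the variance and smoothness estimates already assembled for \eqref{dusbf}.
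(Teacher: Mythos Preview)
Your proposal is correct and follows essentially the same three-part structure as the paper: the averaging identities by telescoping, the SPIDER variance recursion combined with Sampling rule~1 for \eqref{dusbf}, and induction split at $k_0(\alpha)$ (crude geometric bound for $k\le k_0$, contraction via \eqref{k0_def} for $k\ge k_0$) for \eqref{dibard}. The only notable difference is that for the source term $P_k$ the paper exploits the deterministic pathwise bound $\|v_i^{k+1}-v_i^k\|\le L\|x_i^{k+1}-x_i^k\|\le L(5\bar\rho+7C_p)\gamma_k$ (valid since each $f_{i,j}$ is $L$-smooth), which is slightly simpler than your drift-plus-noise decomposition, and then expands the square with H\"older for the cross term rather than invoking Minkowski in $L^2$---both routes land on the same inductive inequality.
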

\begin{proof} The proof is given in Section \ref{proof_pro}.
\end{proof}

With the tracking results in Lemma \ref{maxbarxisub} and Proposition \ref{usubx_pro}, the following proposition indicates that the formula $\mathbb{E}[\langle u_i^k-\bar{x}^k, \nabla F(\bar{x}^k)\rangle]$ is bounded by the FW-gap.
\begin{proposition}
	Suppose Assumptions \ref{rho_assum} and \ref{net_assum} hold. Set the step size $\gamma_k=k^{-\alpha}$, where $\alpha\in (0,1]$. With Sampling rule 1, we have for all $u \in \Omega$,
	\begin{align}\label{uisubbarx}
		&\mathbb{E}[\langle u_i^k-\bar{x}^k, \nabla F(\bar{x}^k)\rangle]\notag \\
		\leq & \mathbb{E}[\langle u-\bar{x}^k,\nabla F(\bar{x}^k)\rangle] \!+\!2\bar{\rho}\Big(L (7\bar{\rho}+10C_p)2^{\alpha}  \gamma_k+C_g\gamma_k\Big).
	\end{align}
\end{proposition}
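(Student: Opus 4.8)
The plan is to exploit the defining optimality of $u_i^k$ in the linear minimization \eqref{u_up} and then reduce everything to controlling the gradient-estimation error $\|d_i^k-\nabla F(\bar{x}^k)\|$, for which the earlier tracking estimates \eqref{dusbf} and \eqref{dibard} are tailor-made. First I would record that, by induction on the Frank-Wolfe update \eqref{xup}, every $x_i^k$ (hence $\bar{x}^k$) stays in the convex set $\Omega$, so that the diameter bound of Assumption \ref{rho_assum} gives $\|u-\bar{x}^k\|\leq\bar{\rho}$ and $\|u_i^k-\bar{x}^k\|\leq\bar{\rho}$ for any $u\in\Omega$.

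The core algebraic step is the splitting
\begin{align*}
\langle u_i^k-\bar{x}^k,\nabla F(\bar{x}^k)\rangle
=&\langle u_i^k-\bar{x}^k,d_i^k\rangle+\langle u_i^k-\bar{x}^k,\nabla F(\bar{x}^k)-d_i^k\rangle.
\end{align*}
Since $u_i^k$ minimizes $\langle\,\cdot\,,d_i^k\rangle$ over $\Omega$, we have $\langle u_i^k,d_i^k\rangle\leq\langle u,d_i^k\rangle$, hence $\langle u_i^k-\bar{x}^k,d_i^k\rangle\leq\langle u-\bar{x}^k,d_i^k\rangle$; writing $\langle u-\bar{x}^k,d_i^k\rangle=\langle u-\bar{x}^k,\nabla F(\bar{x}^k)\rangle+\langle u-\bar{x}^k,d_i^k-\nabla F(\bar{x}^k)\rangle$ and applying Cauchy--Schwarz with the diameter bound to the two residual inner products yields
\begin{align*}
\langle u_i^k-\bar{x}^k,\nabla F(\bar{x}^k)\rangle\leq\langle u-\bar{x}^k,\nabla F(\bar{x}^k)\rangle+2\bar{\rho}\,\|d_i^k-\nabla F(\bar{x}^k)\|.
\end{align*}

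It then remains to bound $\mathbb{E}[\|d_i^k-\nabla F(\bar{x}^k)\|]$ by $(L(7\bar{\rho}+10C_p)2^{\alpha}+C_g)\gamma_k$. I would insert $\bar{d}^k$ and $\overline{\nabla_k F}$ and use the triangle inequality to split into three pieces. The first, $\mathbb{E}[\|d_i^k-\bar{d}^k\|]$, is at most $C_g\gamma_k$ by applying Jensen's inequality to \eqref{dibard} (bounding the single summand by the full sum and taking square roots). The second, $\mathbb{E}[\|\bar{d}^k-\overline{\nabla_k F}\|]$, is at most $L(5\bar{\rho}+7C_p)2^{\alpha}\gamma_k$ directly from \eqref{dusbf}. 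The third, $\|\overline{\nabla_k F}-\nabla F(\bar{x}^k)\|$, I would bound using $\nabla F=\frac{1}{m}\sum_i\nabla f_i$, the $L$-smoothness of each $f_i$ from Fact 1, and the consensus estimate $\|x_i^k-\bar{x}^k\|\leq(2\bar{\rho}+3C_p)2^{\alpha}\gamma_k$ from \eqref{xixsub}, which gives $L(2\bar{\rho}+3C_p)2^{\alpha}\gamma_k$. Adding the three and noting $(5\bar{\rho}+7C_p)+(2\bar{\rho}+3C_p)=7\bar{\rho}+10C_p$ produces exactly the claimed coefficient.

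The routine parts are the Cauchy--Schwarz and diameter estimates; the step that carries the real content is the third piece of the error split, where one must recognize that $\overline{\nabla_k F}-\nabla F(\bar{x}^k)$ is not an algorithmic tracking error but a consensus artifact, controlled only by combining smoothness with the $\mathcal{O}(\gamma_k)$ disagreement bound \eqref{xixsub}. Keeping the three coefficients straight so that they combine into $7\bar{\rho}+10C_p$ is where the bookkeeping must be done carefully.
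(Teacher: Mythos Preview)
Your proposal is correct and follows essentially the same route as the paper: the paper also arrives at $\langle u_i^k-\bar{x}^k,\nabla F(\bar{x}^k)\rangle\leq\langle u-\bar{x}^k,\nabla F(\bar{x}^k)\rangle+2\bar{\rho}\|\nabla F(\bar{x}^k)-d_i^k\|$ via the optimality of $u_i^k$ and Cauchy--Schwarz, then splits $\|d_i^k-\nabla F(\bar{x}^k)\|$ into the same three pieces, bounding them respectively by \eqref{dibard} (with Jensen), \eqref{dusbf}, and $L$-smoothness combined with \eqref{xixsub}, and combines the coefficients exactly as you do.
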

\begin{proof}
	\par Because of \eqref{u_up} and the boundedness of $\Omega$,
	\begin{align}\label{uisubxnabl}
		& \langle u_i^k-\bar{x}^k, \nabla F(\bar{x}^k)\rangle \notag\\
		\leq& \langle u_i^k-\bar{x}^k,d_i^k\rangle+\bar{\rho}\|\nabla F(\bar{x}^k)-d_i^k\|\notag\\
		\leq & \langle u-\bar{x}^k,d_i^k\rangle+\bar{\rho}\|\nabla F(\bar{x}^k)-d_i^k\|\notag\\
		= & \langle u-\bar{x}^k,\nabla F(\bar{x}^k)+d_i^k-\nabla F(\bar{x}^k)\rangle+\bar{\rho}\|\nabla F(\bar{x}^k)-d_i^k\|\notag\\
		\leq & \langle u-\bar{x}^k,\nabla F(\bar{x}^k)\rangle +2\bar{\rho}\|\nabla F(\bar{x}^k)-d_i^k\|,\quad \forall u \in \Omega.
	\end{align}
Recall that $\overline{\nabla_k F}=\frac{1}{m}\sum_{i=1}^m \nabla f_i({x_i^k})$ and $\nabla F(\bar{x}^k)=\frac{1}{m}\sum_{i=1}^m \nabla f_i(\bar{x}^k)$. Then, the norm $\|\nabla F(\bar{x}^k)-d_i^k\|$ in \eqref{uisubxnabl} satisfies
\begin{align}\label{dsubnabF}
	&\|d_i^k-\nabla F(\bar{x}^k)\|\notag\\
	\leq & \|d_i^k-\bar{d}^k\|+\|\bar{d}^k-\overline{\nabla_k F}\|+\|\overline{\nabla_k F}-\nabla F(\bar{x}^k)\|\notag\\
	\leq & \|d_i^k-\bar{d}^k\|+\|\bar{d}^k-\overline{\nabla_k F}\|\!+\!\frac{1}{m}\sum_{i=1}^m \|\nabla f_i({x_i^k})-\nabla f_i(\bar{x}^k)\|\notag\\
	\leq & \|d_i^k-\bar{d}^k\|+\|\bar{d}^k-\overline{\nabla_k F}\|+\frac{L}{m}\sum_{i=1}^m \|{x_i^k}-\bar{x}^k\|.
\end{align}
In addition, because of $\mathbb{E}(\sqrt{X})\leq \sqrt{\mathbb{E}(X)}$ and \eqref{dibard}, we have
 \begin{align}\label{normdsub}
 \mathbb{E}[\|d_i^k-\bar{d}^k\|]\leq &\mathbb{E}\sqrt{\sum_{i=1}^m \|d_i^k-\bar{d}^k\|^2}\notag\\
 \leq &\sqrt{\mathbb{E}\Big[\sum_{i=1}^m \|d_i^k-\bar{d}^k\|^2\Big]}\leq C_g\gamma_k.
 \end{align}
Substituting \eqref{xixsub} to \eqref{dsubnabF} and taking expectation of both sides,
\begin{align}\label{exp_d}
	&\mathbb{E}[\|d_i^k-\nabla F(\bar{x}^k)\|]\notag\\
	\leq \quad & \mathbb{E}[\|d_i^k-\bar{d}^k\|+\|\bar{d}^k-\overline{\nabla_k F}\|]+L (2\bar{\rho}+3C_p)2^{\alpha}\gamma_{k}\notag\\
	\overset{\eqref{dusbf},\eqref{normdsub}}{\leq} & C_g\gamma_k+ L (5\bar{\rho}+7C_p)2^{\alpha}  \gamma_k+L (2\bar{\rho}+3C_p)2^{\alpha}\gamma_k\notag\\
	=\quad &L (7\bar{\rho}+10C_p)2^{\alpha}  \gamma_k+C_g\gamma_k.
\end{align}
Then, taking expectation of \eqref{uisubxnabl} and using \eqref{exp_d} yields $ \forall u \in \Omega$,
\begin{align*}
	&\mathbb{E}[ \langle u_i^k-\bar{x}^k, \nabla F(\bar{x}^k)\rangle]\\
	\leq & \mathbb{E}[\langle u-\bar{x}^k,\nabla F(\bar{x}^k)\rangle] +2\bar{\rho}\Big(L (7\bar{\rho}+10C_p)2^{\alpha}  \gamma_k+C_g\gamma_k\Big).
\end{align*}
\end{proof}
\par Now, we are ready to prove that the proposed DstoFW converges for both convex and non-convex optimization. Before that, we introduce one lemma from \cite[Lemma2]{momen_lemm}.
\begin{lemma}\label{phi_des_lem}
	Let $\phi(t)$ be a sequence of real numbers satisfying
	\begin{align*}
		\phi(t)=\Big(1-\frac{A}{(t+t_0)^{r_1}}\Big)\phi(t-1)+\frac{B}{(t+t_0)^{r_2}},
	\end{align*}
for some $r_1\in [0,1]$ such that $r_1\leq r_2\leq 2r_1$, $A>1$ and $B\geq 0$. Then, $\phi(t)$ converges to zero at the following rate
$$\phi(t)\leq \frac{Q}{(t+t_0+1)^{r_2-r_1}},$$
where $Q=\max\{\phi(0)(t_0+1)^{r_2-r_1},\frac{B}{A-1}\}$.
\end{lemma}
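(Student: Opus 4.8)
The plan is to prove the bound by induction on $t$, taking the claimed estimate $\phi(t)\le Q/(t+t_0+1)^{r_2-r_1}$ as the induction hypothesis. Throughout I would write $s=t+t_0$ and $p=r_2-r_1$, so that the hypotheses $r_1\le r_2\le 2r_1$ with $r_1\in[0,1]$ translate into the convenient bounds $0\le p\le r_1\le 1$. For the base case $t=0$ the claim reads $\phi(0)\le Q/(t_0+1)^{p}$, which is immediate because $Q\ge \phi(0)(t_0+1)^{p}$ is exactly the first term inside the maximum defining $Q$ (this step uses no positivity of $\phi$).

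For the inductive step I would assume $\phi(t-1)\le Q/s^{p}$ (note $(t-1)+t_0+1=s$) and insert it into the recurrence. Provided the contraction factor $1-A/s^{r_1}$ is nonnegative, monotonicity of multiplication yields
\begin{align*}
\phi(t)\le \Big(1-\frac{A}{s^{r_1}}\Big)\frac{Q}{s^{p}}+\frac{B}{s^{r_2}}
=\frac{Q}{s^{p}}+\frac{B-AQ}{s^{r_2}}.
\end{align*}
Here I would invoke the second term in the definition of $Q$, namely $Q\ge B/(A-1)$, which is equivalent to $B-AQ\le -Q$, to absorb the additive contribution and reach $\phi(t)\le Q/s^{p}-Q/s^{r_2}$.

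It then remains to verify the purely elementary inequality
\begin{align*}
\frac{1}{s^{p}}-\frac{1}{s^{r_2}}\le \frac{1}{(s+1)^{p}},
\qquad\text{i.e.}\qquad
\frac{1}{s^{p}}-\frac{1}{(s+1)^{p}}\le \frac{1}{s^{r_2}}.
\end{align*}
I would bound the left-hand side by applying the mean value theorem to $x\mapsto x^{-p}$, which gives $s^{-p}-(s+1)^{-p}\le p\, s^{-(p+1)}$; since $r_2=p+r_1$, the target inequality then reduces to $p\le s^{\,1-r_1}$, and this holds because $p\le r_1\le 1$ while $s\ge 1$ forces $s^{\,1-r_1}\ge 1$. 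Completing this chain closes the induction and delivers the stated rate.

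The main obstacle is the sign of the contraction factor $1-A/s^{r_1}$: because $A>1$, for small $s$ this factor is negative, so the upper bound can no longer be carried forward by monotonicity, and in fact the stated estimate can fail outright when $t_0$ is too small. The clean resolution, consistent with the paper's use of the shift $k_0(\alpha)$ in \eqref{k0_def}, is to observe that $A>1$ and $r_1>0$ guarantee $1-A/s^{r_1}\ge 0$ as soon as $s\ge A^{1/r_1}$, so $t_0$ is taken large enough that the factor stays nonnegative over the entire range of iterations; the role of $A>1$ is precisely to supply a strict contraction in this regime while keeping $B/(A-1)$ finite. Once the nonnegativity of the factor is secured, the remaining work (the mean value estimate and the reduction to $p\le s^{\,1-r_1}$) is routine.
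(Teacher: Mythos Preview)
The paper does not prove this lemma at all; it merely quotes it from an external reference (the text reads ``we introduce one lemma from \cite[Lemma 2]{momen\_lemm}''), so there is no in-paper argument to compare against. Your induction proof is the standard one for such recursive bounds and is correct: the reduction of the inductive step to the elementary inequality $s^{-p}-(s+1)^{-p}\le s^{-r_2}$, followed by the mean-value estimate and the observation $p\le 1\le s^{\,1-r_1}$, is exactly how this type of lemma is usually closed. You are also right that the statement tacitly requires $1-A/(t+t_0)^{r_1}\ge 0$; in the paper's sole invocation of the lemma ($r_1=1$, $A=2$, $t_0=1$, $t\ge 1$) this factor is indeed nonnegative throughout, so the hidden hypothesis is satisfied there.
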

\par \textbf{Proof of Theorem \ref{fconvex_theo}:}
\begin{proof}
	Taking the average of both sides of  \eqref{xup}, we obtain
	\begin{align}\label{barxsub}
		\bar{x}^{k+1}-\bar{x}^k=\gamma_k(\frac{1}{m}\sum_{i=1}^m u_i^{k}-\bar{x}^k).
	\end{align}
Let $\triangle F^k\triangleq F(\bar{x}^k)-F(x^*)$, where $x^*$ is an optimal solution to \eqref{opti_p}. It follows from the $L$-smoothness of $F$ and the boundedness of $\Omega$ that
\begin{align}\label{nabl}
	\triangle F^{k+1}\leq \triangle F^k+ \frac{\gamma_k}{m}\sum_{i=1}^m \langle \nabla F(\bar{x}^k),  u_i^k-\bar{x}^k\rangle+\frac{L}{2}\gamma_k^2\bar{\rho}^2.
\end{align}
Taking expectation of \eqref{nabl} and using  \eqref{uisubbarx} yields
\begin{align*}
	\mathbb{E}[\triangle F^{k+1}]\leq & \mathbb{E}[\triangle F^k]+ \gamma_k \mathbb{E}[\langle\bar{u}^k \!-\! \bar{x}^k,\nabla F(\bar{x}^k) \rangle]+\frac{L}{2}\gamma_k^2\bar{\rho}^2\\
	&+2 \gamma_k \bar{\rho}\Big(L (7\bar{\rho}+10C_p)2^{\alpha}  \gamma_k+C_g\gamma_k\Big),
\end{align*}
where $\bar{u}^k \in \Omega$ is the minimizer of the following linear optimization
\begin{align}\label{barudef}
	\bar{u}^k\in {\rm argmin}_{u\in \Omega} \langle u, \nabla F(\bar{x}^k)\rangle.
\end{align}
Substituting $\gamma_k={\frac{2}{k+1}}$ into the above inequality leads to
\begin{align}\label{kbetw}
	\mathbb{E}[\triangle F^{k+1}]\leq & \mathbb{E}[\triangle F^k]+ \frac{2}{k+1} \mathbb{E}[\langle\bar{u}^k-\bar{x}^k,\nabla F(\bar{x}^k) \rangle]\notag\\
	&\!+\!\frac{2L}{(k +1)^2}\bar{\rho}^2\!+\!8  \bar{\rho} \frac{2L (7\bar{\rho}\!+\!10C_p) +C_g}{(k+1)^2}.
\end{align}
Because of the convexity of $F$ and the definition of $\bar{u}^k$ in \eqref{barudef},
\begin{align*}
	\langle\bar{u}^k-\bar{x}^k,\nabla F(\bar{x}^k) \rangle\leq \langle x^*-\bar{x}^k,\nabla F(\bar{x}^k) \rangle\leq -\triangle F^k.
\end{align*}
Plugging the above inequality into \eqref{kbetw}, we obtain
\begin{align*}
	\mathbb{E}[\triangle F^{k+1}]\leq& (1-\frac{2}{k+1})\mathbb{E}[\triangle F^k]+\frac{2L}{(k+1)^2}\bar{\rho}^2\\
	&+8  \bar{\rho} \frac{2L (7\bar{\rho}+10C_p) +C_g}{(k+1)^2}.
\end{align*}
Then, using Lemma \ref{phi_des_lem} with $r_1=1,\ r_2=2, A=2, t_0=1$ and $B=2L\bar{\rho}^2+8\bar{\rho}(2L (7\bar{\rho}+10C_p) +C_g)$,
\begin{align}\label{convex_re}
	\mathbb{E}[\triangle F^k]\leq \frac{Q_c}{k+2}=\mathcal{O}(\frac{1}{k}),
\end{align}
where $Q_c=\max \{2(F(\bar{x}^1)-F(x^*)),B\}.$
\end{proof}
\par \textbf{Proof of Theorem \ref{fnonconvex_theo}:}
\begin{proof}
	Because $F$ is $L$-smooth,
	\begin{align}\label{Fsmooth}
		F(\bar{x}^{k+1})\leq& F(\bar{x}^k)+\langle \nabla F(\bar{x}^k),\bar{x}^{k+1}-\bar{x}^k\rangle \!+\!\frac{L}{2}\|\bar{x}^{k+1}\!-\!\bar{x}^k\|^2\notag\\
		\leq &F(\bar{x}^k)+\langle \nabla F(\bar{x}^k),\bar{x}^{k+1}-\bar{x}^k\rangle+\frac{L}{2}\gamma_k^2 \bar{\rho}^2,
	\end{align}
where the last inequality holds because $	\|\bar{x}^{k+1}-\bar{x}^k\|=\gamma_k\|\frac{1}{m}\sum_{i=1}^m (u_i^{k}-\overline{x_i^k})\|\leq \gamma_k\bar{\rho}$.
	We recall the definition of FW gap
	\begin{align}
		g_k\triangleq \max_{x\in \Omega} \langle \nabla F(\bar{x}^k),\bar{x}^k-x\rangle=\langle \nabla F(\bar{x}^k),\bar{x}^k-\bar{u}^k\rangle,
	\end{align}
	where $\bar{u}^k$ is defined in \eqref{barudef}.
	Taking expectation of both sides of \eqref{Fsmooth} and using \eqref{uisubbarx}, \eqref{barxsub}, 
	\begin{align}\label{Fgap}
		\mathbb{E}[F(\bar{x}^{k+1})]\leq& \mathbb{E}[F(\bar{x}^k)]-\gamma_k \mathbb{E}[\langle \nabla F(\bar{x}^k),\bar{x}^k-\bar{u}^k\rangle]\notag\\
		&+\!2 \gamma_k \bar{\rho}\Big(L (7\bar{\rho}\!+\!10C_p)2^{\alpha}  \gamma_k\!+\!C_g\gamma_k\Big)\!+\!\frac{L}{2}\gamma_k^2 \bar{\rho}^2\notag\\
		=& \mathbb{E}[F(\bar{x}^k)]-\gamma_k \mathbb{E}[g_k]+\frac{L}{2}\gamma_k^2 \bar{\rho}^2\notag\\
		&+2 \gamma_k \bar{\rho}\Big(L (7\bar{\rho}+10C_p)2^{\alpha}  \gamma_k+C_g\gamma_k\Big).
	\end{align}
	 Without loss of generality, we assume that $K$ is an even integer in the following analysis.
	Summing the two sides of \eqref{Fgap} from $k=\frac{K}{2}+1$ to $k=K$ yields
	\begin{align}\label{non_ineq}
		&\sum_{k=K/2+1}^{K}\gamma_k \mathbb{E}[g_k]\notag\\
		\leq& \mathbb{E}[F(\bar{x}_{K/2+1})]-\mathbb{E}[F(\bar{x}_{K+1})]\notag\\
		&+\!\sum_{k=K/2+1}^{K} \!\Big(2 \gamma_k^2 \bar{\rho}\big(L (7\bar{\rho}\!+\!10C_p) 2^{\alpha} +C_g \big)\!+\frac{L}{2}\gamma_k^2 \bar{\rho}^2\Big)\!\notag\\
		\leq & G\bar{\rho} + C_0\sum_{k=K/2+1}^K \frac{1}{k^{2\alpha}},
	\end{align}
where $C_0\triangleq 2\bar{\rho}\big(L (7\bar{\rho}+10C_p) 2^{\alpha} +C_g\big)+L\bar{\rho}^2/2$, $\gamma_k=\frac{1}{k^{\alpha}}$ and $G$-Lipschitz of $F$ are used in the last inquality.
\par The left-hand side of \eqref{non_ineq} satisfies
\begin{align}\label{lowb}
	\sum_{k=K/2+1}^{K}\gamma_k\mathbb{E}[g_k]\geq \min_{k\in[K/2+1,K]} \mathbb{E}[g_k] \sum_{k=K/2+1}^K \gamma_k.
\end{align}
In addition, by the integral test \cite{calculus} and $\gamma_k=\frac{1}{k^{\alpha}}$, for $\alpha \in (0,1]$,
\begin{align}\label{sumgamlim}
	\sum_{k=K/2+1}^K \gamma_k \geq& \int_{k=K/2+1}^{K+1}\frac{1}{k^{\alpha}}{\rm d}k\notag\\
	=&\frac{k^{1-\alpha}}{1-\alpha}\Big|_{K/2+1}^{K+1}\notag\\
	=& \frac{(K+1)^{1-\alpha}-(\frac{K+2}{2})^{1-\alpha}}{1-\alpha}\notag\\
	=&\frac{(K+1)^{1-\alpha}\big(1-(\frac{K+2}{2(K+1)})^{1-\alpha}\big)}{1-\alpha}\notag\\
	\geq &\frac{K^{1-\alpha}}{1-\alpha}\big(1-(\frac{2}{3})^{1-\alpha}\big),
\end{align}
where we use the fact that $(\frac{K+2}{2(K+1)})^{1-\alpha}\leq (\frac{2}{3})^{1-\alpha}$ for $K\geq 2$ in the last inequality.
\par When $\alpha \geq 0.5$, by the integral test,
\begin{align}\label{sumlim}
	\sum_{k=K/2+1}^K \frac{1}{k^{2\alpha}}\leq& \sum_{k=K/2+1}^K \frac{1}{k} \notag\\
	\leq & \int_{K/2}^{K} k^{-1}{\rm d}k\notag\\
	= &\ln k|_{K/2}^{K} \notag\\
	= & \ln 2.
\end{align}
Substituting \eqref{lowb}, \eqref{sumgamlim} and \eqref{sumlim} to \eqref{non_ineq} yields
	\begin{align*}
		\min_{k\in [\frac{K}{2}+1,K]} \mathbb{E}[g_k]\leq \frac{1}{K^{1-\alpha}}\frac{1-\alpha}{1-(2/3)^{1-\alpha}}(G \bar{\rho}+C_0\ln 2).
	\end{align*}
\par When $0<\alpha<0.5$,
\begin{align}
	\sum_{k=K/2+1}^K \frac{1}{k^{2\alpha}}\leq \int_{K/2}^K \frac{1}{k^{2\alpha}} {\rm d}k= \frac{2^{1-2\alpha}-1}{1-2\alpha}(\frac{K}{2})^{1-2\alpha}.
\end{align}
Then, the right-hand side of \eqref{non_ineq} is upper bounded by
\begin{align}\label{sumlim2}
	G\bar{\rho} + C_0\sum_{k=K/2+1}^K \frac{1}{k^{2\alpha}}\leq \Big(G\bar{\rho} + C_0 \frac{1-(1/2)^{1-2\alpha}}{1-2\alpha}\Big)K^{1-2\alpha}.
\end{align}
Substituting \eqref{lowb}, \eqref{sumgamlim} and \eqref{sumlim2} to \eqref{non_ineq} yields
	\begin{align*}
		\min_{k\in [\frac{K}{2}\!+\!1,K]}\! \mathbb{E}[g_k]\leq \frac{1}{K^{\alpha}}\frac{1\!-\!\alpha}{1\!-\!(2/3)^{1\!-\alpha}}\Big(\!G \bar{\rho}\!+\!\frac{C_0(1\!-\!(1/2)^{1\!-\!2\alpha})}{1-2\alpha}\Big).
	\end{align*}
\par Combining the two cases $0.5\leq \alpha\leq 1$ and $0<\alpha<0.5$, we get the desired result in \eqref{eg_nonconvex}.
\end{proof}
{
\par \textbf{Proof of Corollary \ref{nonconv_cor}:}
\begin{proof}
	(1) Convex optimization case: It follows from \eqref{convex_re} that, to get an $\epsilon$-solution, the number of iteration $K$ satisfies
	\begin{align}
		\mathbb{E}[\triangle F^K]\leq \frac{Q_c}{K+2}\leq \epsilon,
	\end{align}
	where $Q_c=\max \{2(F(\bar{x}^1)-F(x^*)),B\}$ and $B=2L\bar{\rho}^2+8\bar{\rho}(2L (7\bar{\rho}+10C_p) +C_g)$.
	It is sufficient to choose $K= \lfloor\frac{Q_c}{\epsilon}\rfloor$. Since LO is calculated once per  iteration, the LO complexity of DstoFW is $\mathcal{O}(\frac{1}{\epsilon})$.
	\par Then, we consider the IFO complexity of DstoFW. 
	Let $\tilde{n}_k$ be a positive integer such that $(\tilde{n}_k-1)q+1\leq k \leq \tilde{n}_kq-1$.
	Making use of \eqref{sam_prac} and $\gamma_k=\frac{2}{k+1}$, the sample size $|S^{(\tilde{n}_k-1)q+1}|$, which is the largest number in sequence $\{|S^t|\}_{(\tilde{n}_k-1)q+1}^{\tilde{n}_kq-1}$, satisfies 
	\begin{equation}\label{s_big}
	\left\{
	\begin{aligned}
		|S^1|&=\! \Big\lceil \frac{q^2}{2} |S^{q-1}|\Big\rceil, \ \tilde{n}_k=1,\\
		|S^{(\tilde{n}_k-1)q+1}|&=\! \Big\lceil(1+\frac{q-2}{(\tilde{n}_k-1)q+2})^2 |S^{\tilde{n}_kq-1}|\Big\rceil, \ \tilde{n}_k\geq 2.
	\end{aligned}
	\right.
	\end{equation}
 Using the fact that $1+\frac{q-2}{(\tilde{n}_k-1)q+2}<2$, we obtain
	\begin{equation}\label{s_big2}
	\left\{
	\begin{aligned}
		|S^1|&\leq q^2 |S^{q-1}|, \ \tilde{n}_k=1,\\
		|S^{(\tilde{n}_k-1)q+1}|&\leq 4|S^{\tilde{n}_kq-1}|, \ \tilde{n}_k\geq 2.
	\end{aligned}
	\right.
	\end{equation}
	In addition, for every $q$ iterations, there is a full local gradient computation, whose IFO is $n_i$.
	\par To guarantee that each element of $\{|S^t|\}_{(\tilde{n}_k-1)q+1}^{\tilde{n}_kq-1}$ in \eqref{s_big2} is smaller than $n_i$ ($n_i> 10$), we set $\sqrt{|S^{\tilde{n}_kq-1}|}=q= \lfloor n_i^{1/4}\rfloor$ for all $\tilde{n}_k\in \mathbb{Z}^+$. Recall that $n=\max\{n_1,\cdots,n_m\}$. Then, the number of IFO calls (denoted by $\#$(IFO)) of each agent $i$ in DstoFW algorithm satisfies
	\begin{align*}
	\# ({\rm IFO}) &\leq n_i+q^3|S^{q-1}|+\big(n_i+q4| S^{\tilde{n}_kq-1}|\big)(K/q-1)\\
	&\leq n_i+n_i^{5/4}+(n_i+4n_i^{3/4})Kn_i^{-1/4}\\
 &=\mathcal{O}(\frac{n^{3/4}}{\epsilon}).
	\end{align*}
	\par(2)  Non-convex optimization case:
Using \eqref{non_ineq} and $\gamma_k=1/{\sqrt{k}}$, we show that
\begin{align*}
	&\frac{1}{\sqrt{K}}\sum_{k=K/2+1}^K \mathbb{E}[g_k] \leq \sum_{k=K/2+1}^K \frac{1}{\sqrt{k}}\mathbb{E}[g_k]\\
	&\leq G\bar{\rho}+\frac{L\bar{\rho}^2+4\bar{\rho}(L(5\bar{\rho}+7C_p) 2^{\alpha}+C_g)}{2}\ln 2,
\end{align*}
where the last inequality holds due to \eqref{sumlim}.
Multiplying the both sides by $\frac{2}{\sqrt{K}}$ yields
\begin{align*}
	\frac{1}{K/2} \!\sum_{k=K/2+1}^K\! \mathbb{E}[g_k]\! \leq\! \frac{2G\bar{\rho}}{\sqrt{K}}\!+\!\frac{L\bar{\rho}^2\!+\!4\bar{\rho}(L(5\bar{\rho}\!+\!7C_p) 2^{\alpha} \!+\!C_g)}{\sqrt{K}}\!\ln 2.
\end{align*}
To get an $\epsilon$-solution, the number of iteration $K$ satisfies
$$ \frac{2G\bar{\rho}}{\sqrt{K}}+\frac{L\bar{\rho}^2+4\bar{\rho}(L(5\bar{\rho}+7C_p) 2^{\alpha}+C_g)}{\sqrt{K}}\ln 2\leq \epsilon,$$
such that $K= \mathcal{O}(\frac{1}{\epsilon^2})$. Since LO is computed once per iteration, the LO complexity of DstoFW is $\mathcal{O}(\frac{1}{\epsilon^2})$. The analysis of IFO is similar to that of the above convex case and the only difference is $\gamma_k=1/\sqrt{k}$. 
 Set $\sqrt{|S^{\tilde{n}_kq-1}|}=q=\lfloor n_i^{1/3}\rfloor$ for all $\tilde{n}_k\in \mathbb{Z}^+$. Then, the IFO of DstoFW for non-convex optimization is  $\# ({\rm IFO})\leq n_i+q^2|S^{q-1}|+(n_i+q2|S^{\tilde{n}_kq-1}|)(K/q-1)=\mathcal{O}(\frac{n^{2/3}}{\epsilon^2})$.
\end{proof}}
\section{Simulation}\label{simulation}
In this section, we demonstrate the proposed algorithm in solving the binary classification problem, which is a finite-sum optimization with the general form \eqref{opti_p},
where $f_i$ may be convex \cite{log_simu} or non-convex \cite{Huang2019}. For comparison, we apply the decentralized DenFW algorithm in \cite{FW-TAC}, the centralized SPIDER-FW algorithm (CenFW) in \cite{pmlr-v97-yurtsever19b} and the proposed distributed DstoFW algorithm to solve the  problem. The loss function $F$ and FW-gap $g_k$ over iterations $k$ of different algorithms will be compared in the simulation\footnote{The simulation codes are provided at https://github.com/managerjiang/VR-FW}. 
\par The distributed algorithms are applied over a same ten-agent undirected connected network ($m=10$), which are shown in Fig. \ref{topo_fig}, and each agent only knows the local information $f_i$. In simulation, the constraint set is set as an $l_1$ norm ball constraint $\Omega=\{x|\|x\|_1\leq R\}$. Then, $u_i^k$ in DstoFW admits a closed form solution 
\begin{align*}
	u_i^k={\rm argmin}_{u\in\Omega}\langle u,d_i^k\rangle=&R\cdot [0,\cdots,0,-{\rm sgn}[d_{i}^k]_j,0,\cdots,0]^\top \notag\\ & {\rm with}\ j={\rm argmax}_l\big|[d_{i}^k]_l\big|,
\end{align*}
where the notation $[d]_l$ denotes the $l$th element in vector $d$. We take $R=20$ of the constraint set in the simulation test.
 We take three publicly available real datasets, which are summarized in Table \ref{real_data}, to test the simulation result. 
\begin{table}[!htbp]
	\caption{Real datasets for binary classification}\label{real_data}
	\centering
	\begin{tabular}{c|c|c|c}
		\hline
		datasets & \#samples & \#features & \#classes  \\
		\hline
		$a9a$  & 32561& 123 &2\\
		\hline
		w8a & 64700&300&2\\
		\hline
		$covtype.binary$&581012&54&2\\
		\hline
	\end{tabular}
\end{table}

\begin{figure}
	\centering
	\includegraphics[width=7cm]{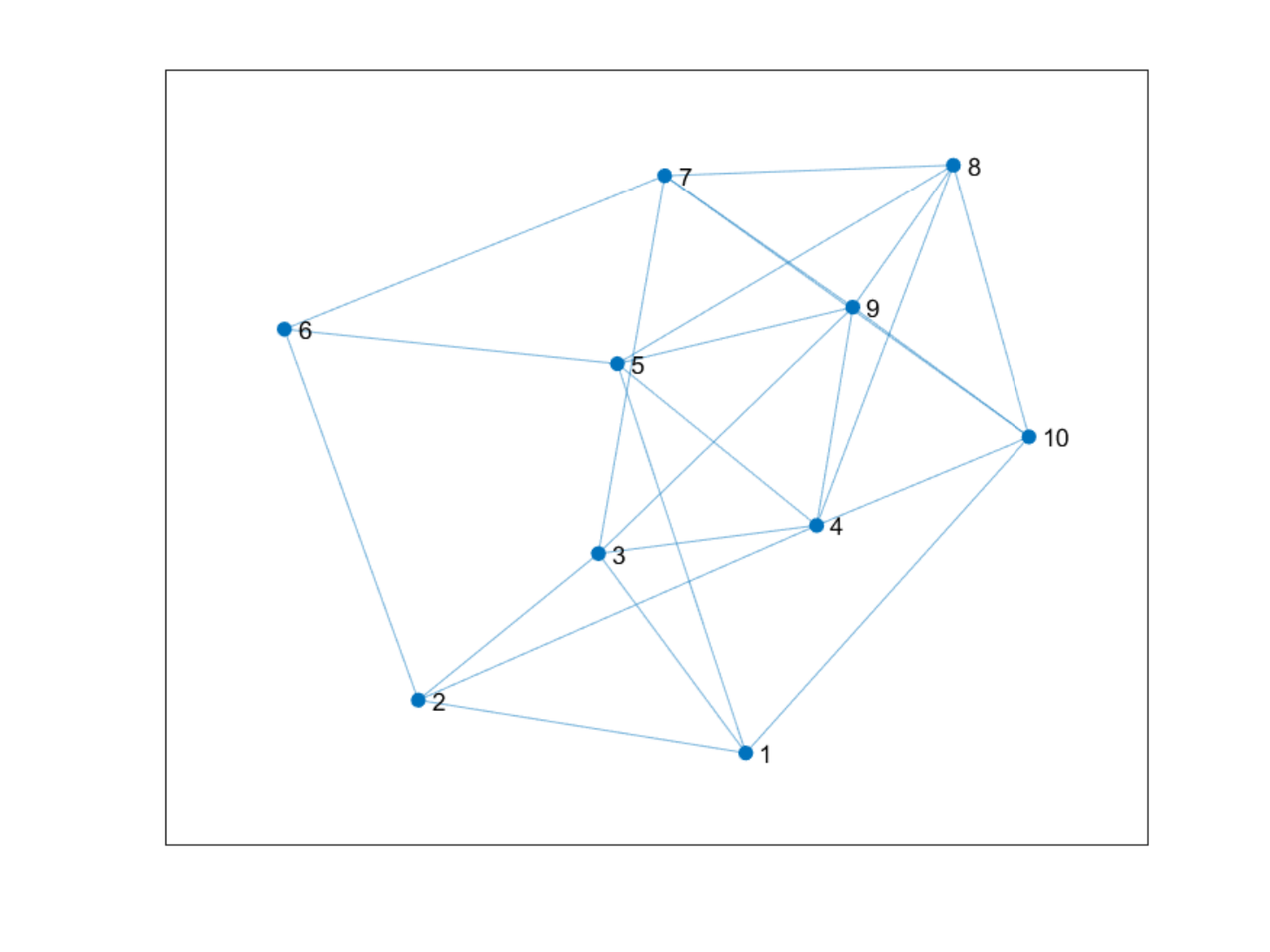}
	\caption{Ten-agent connected network}
	\label{topo_fig}	
\end{figure}
\begin{figure*}
	\centering
	\subfigure[a9a dataset]{
		\includegraphics[width=5.5cm]{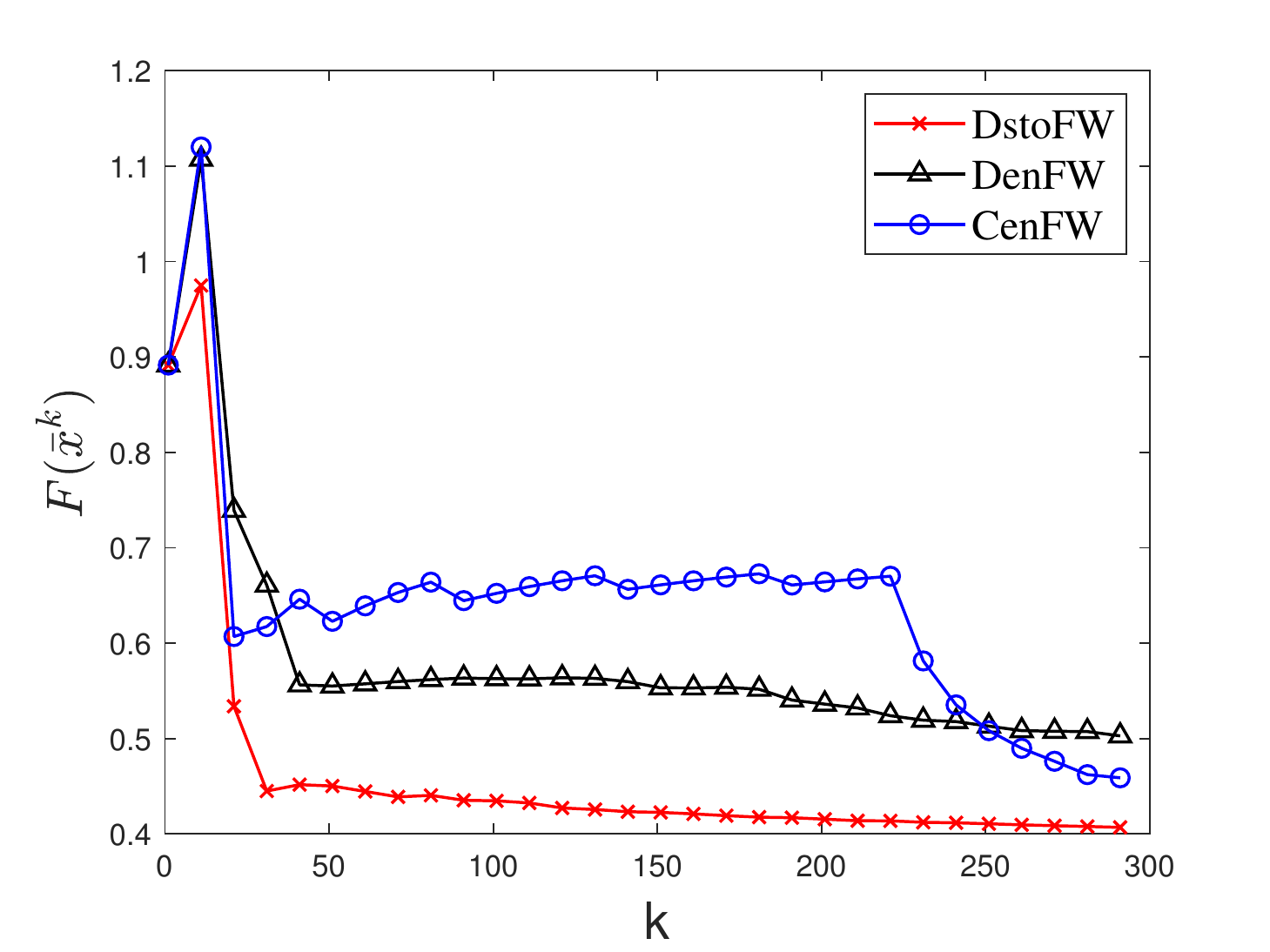}
	}
	\subfigure[w8a dataset]{
		\includegraphics[width=5.5cm]{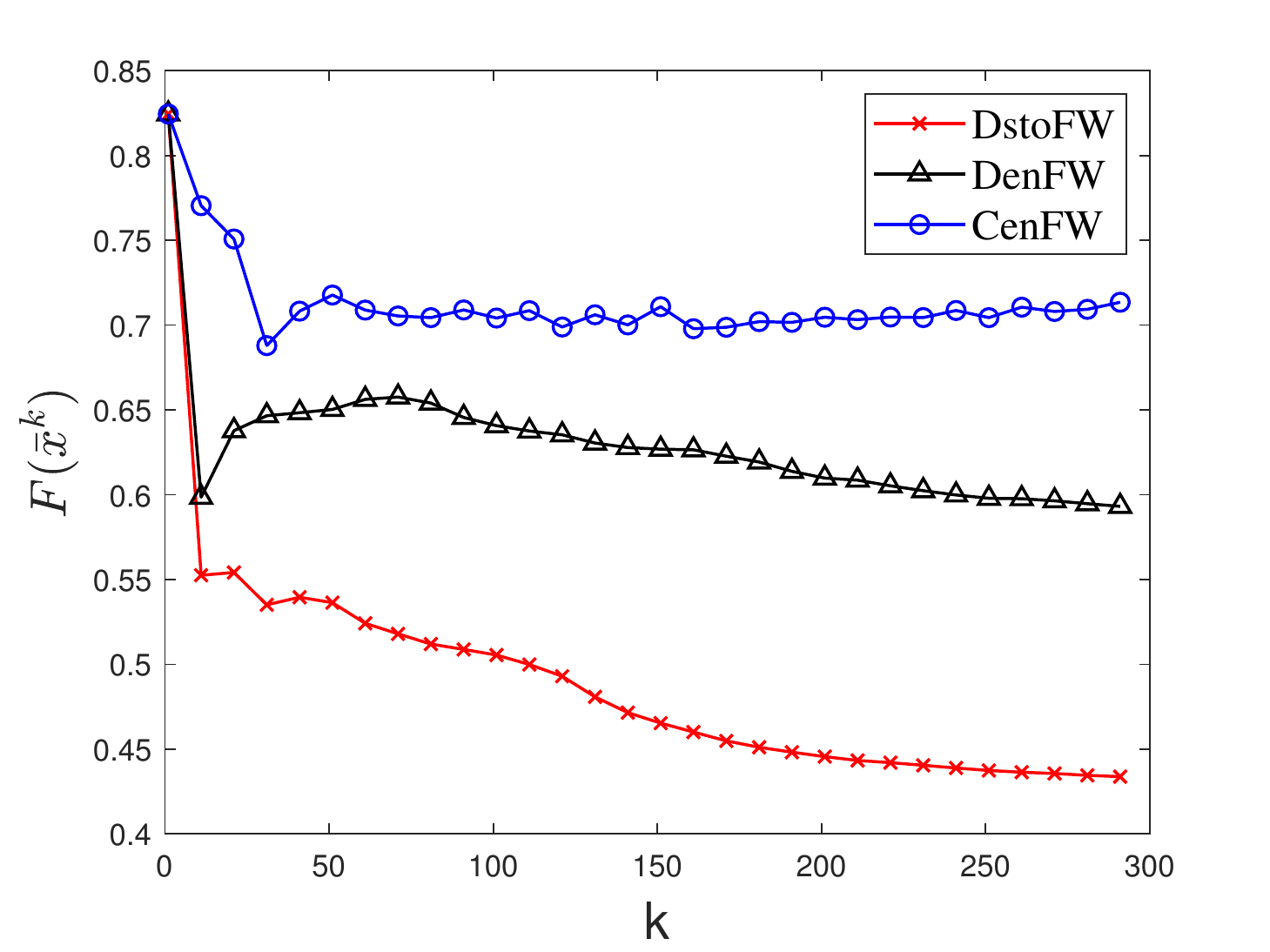}	
	}
	\subfigure[covtype dataset]{
		\includegraphics[width=5.5cm]{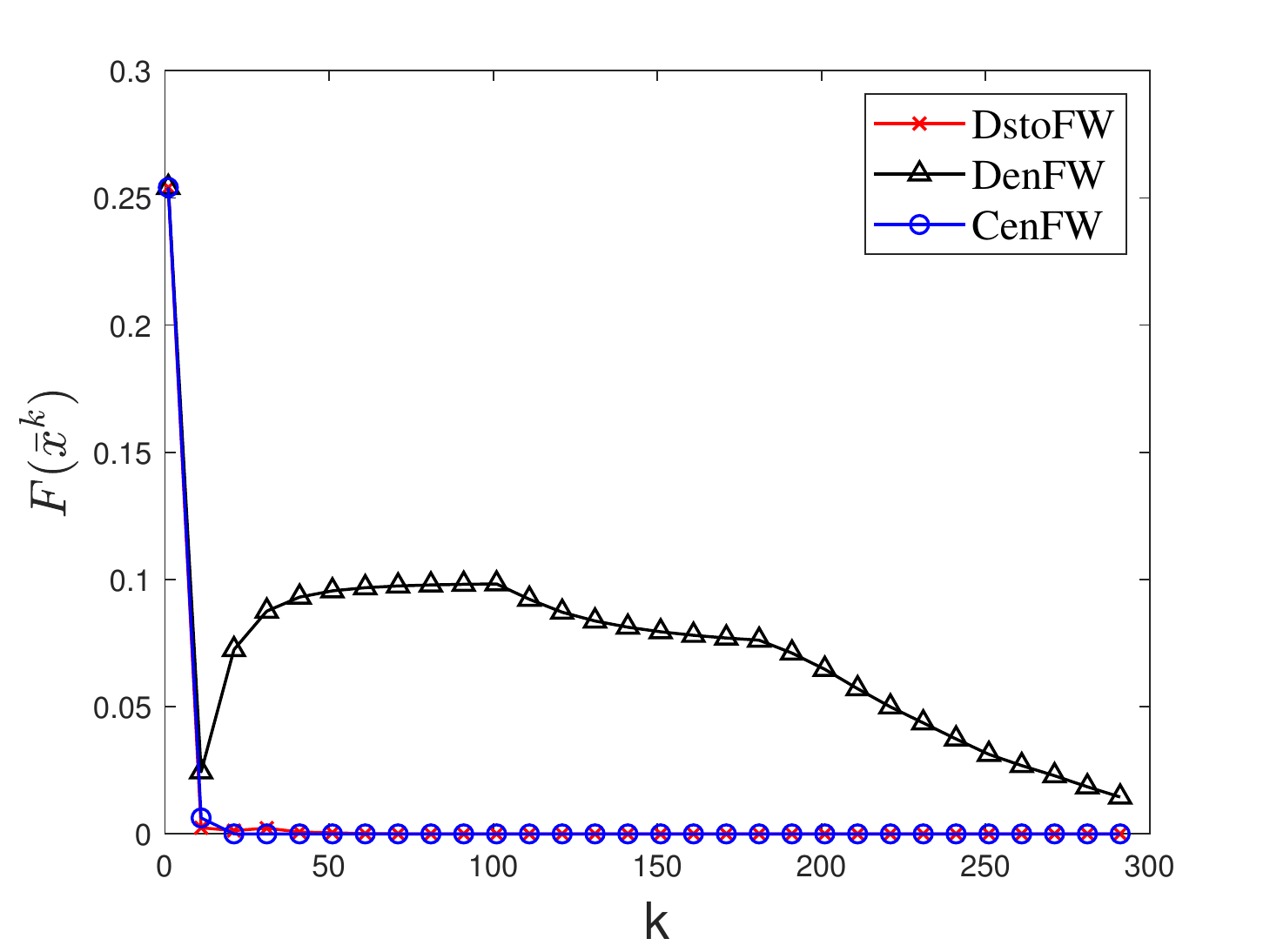}
	}
	\caption{ Different algorithms for convex optimization}
	\label{fwgap_fig}
\end{figure*}
 \begin{figure*}
	\centering
	\subfigure{
		\includegraphics[width=5.5cm]{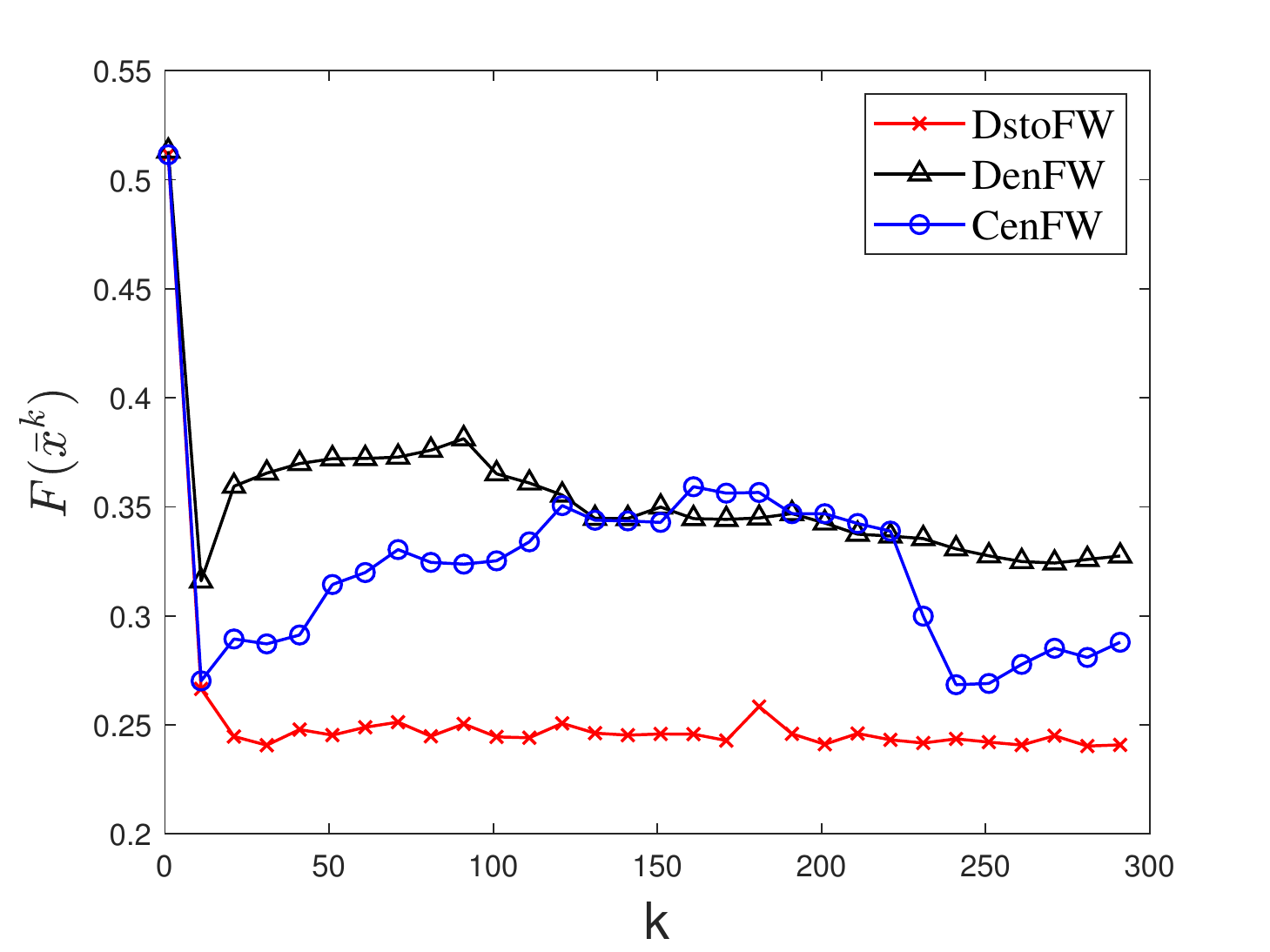}
	}
	\subfigure{
		\includegraphics[width=5.5cm]{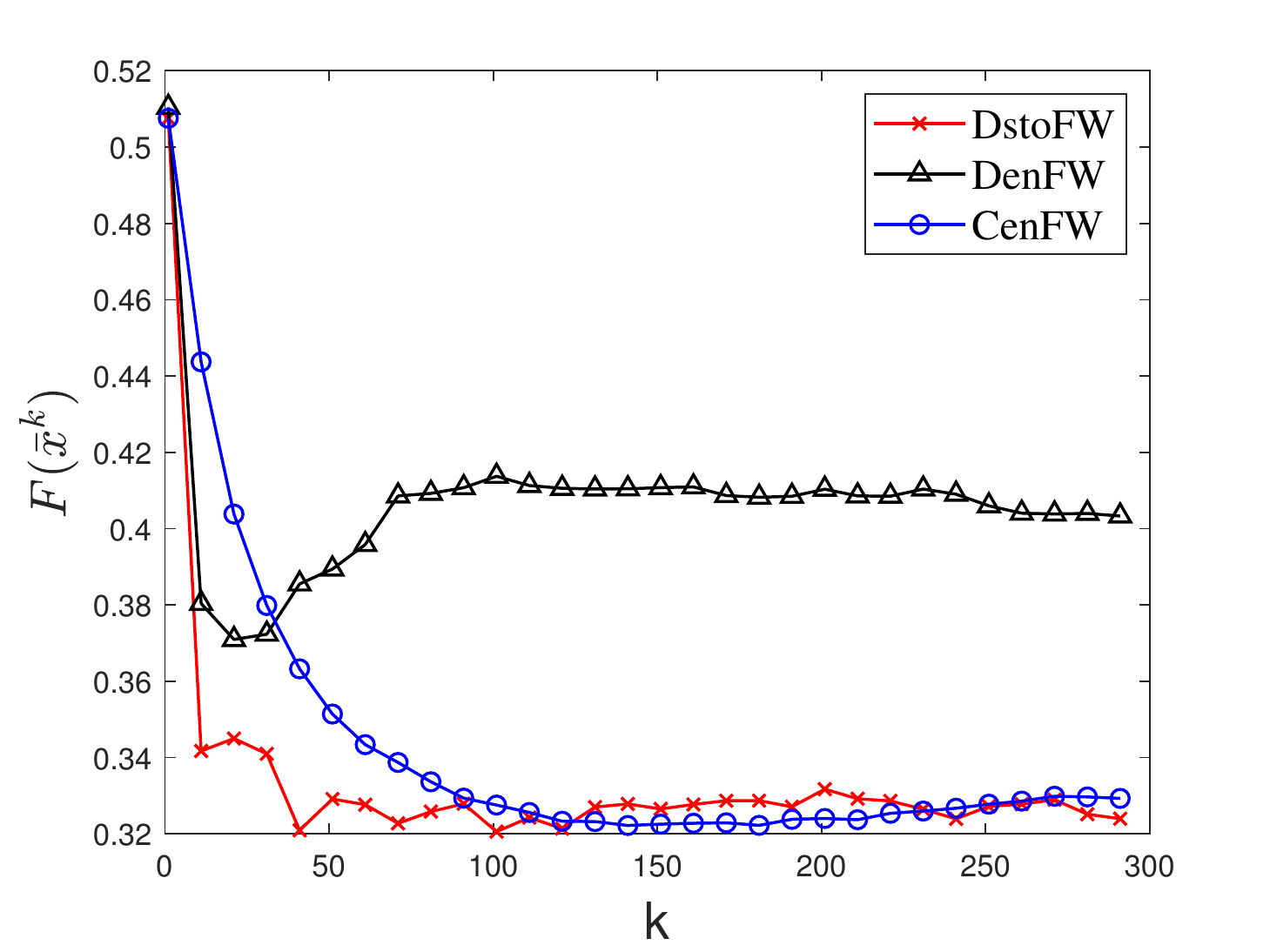}
	}
	\subfigure{
		\includegraphics[width=5.5cm]{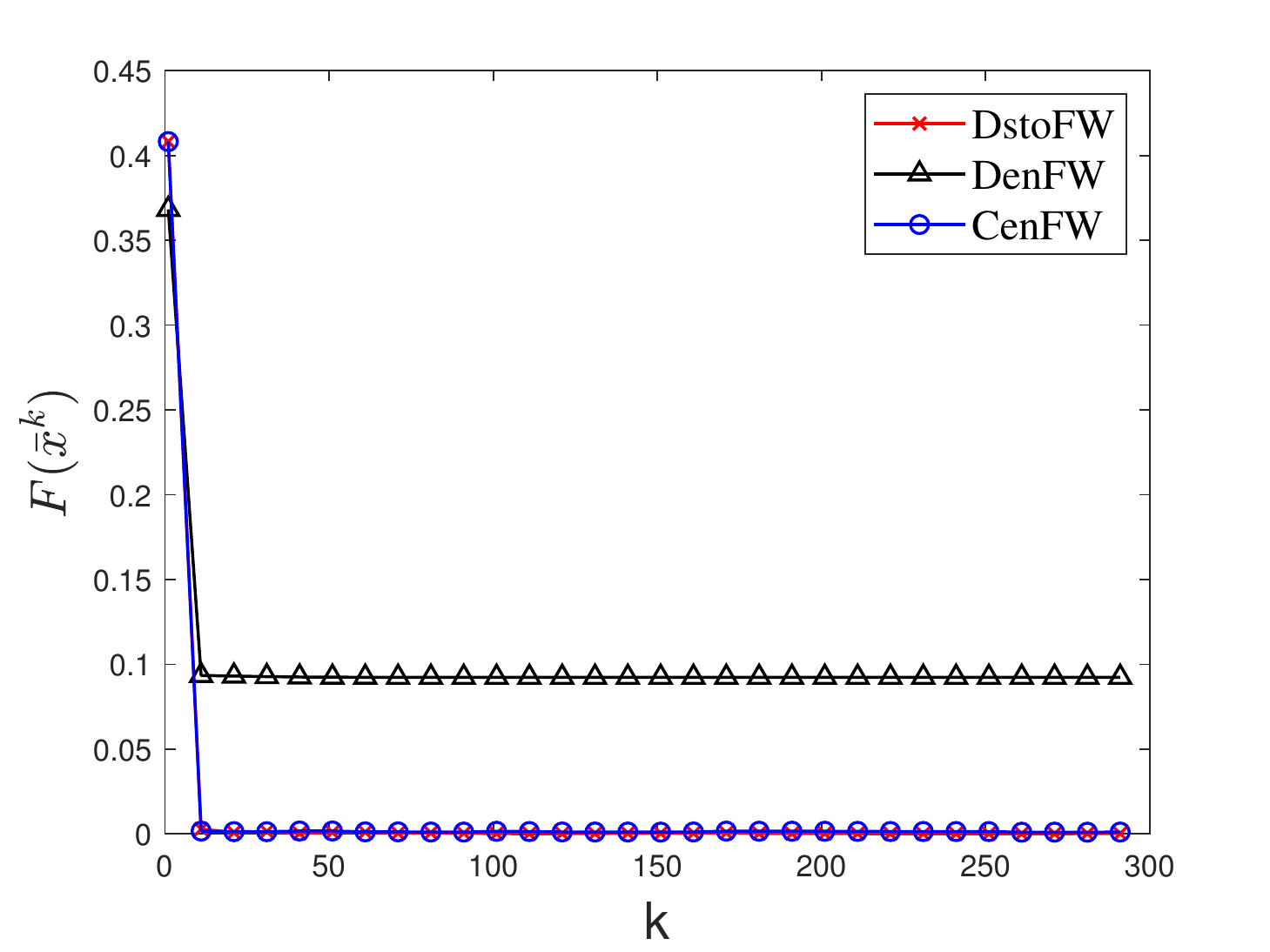}
	}
	\setcounter{subfigure}{0}
	\subfigure[a9a dataset]{
		\includegraphics[width=5.5cm]{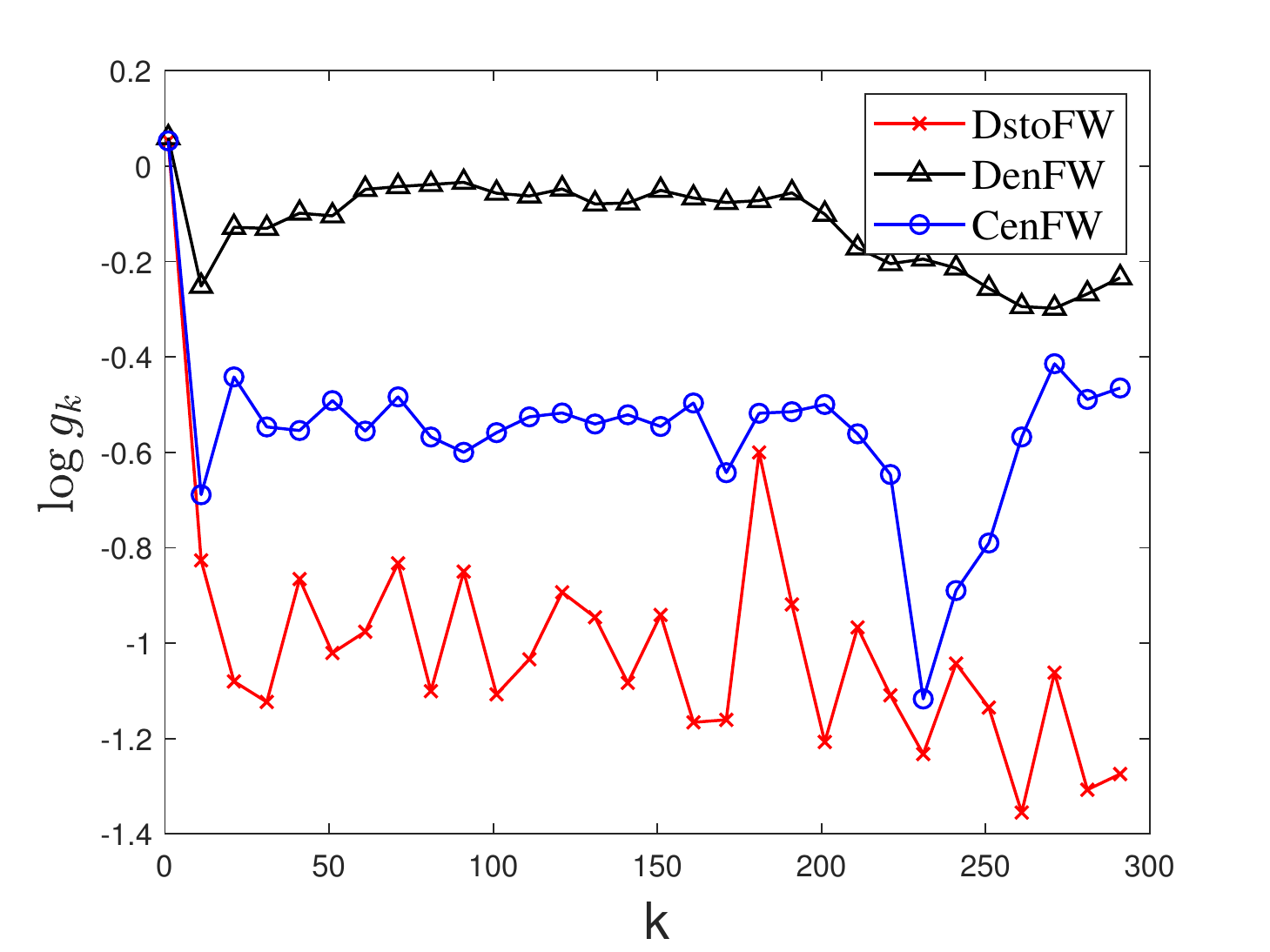}
		\label{a9a_fwgap_noncon}	
	}
	\subfigure[w8a dataset]{
		\includegraphics[width=5.5cm]{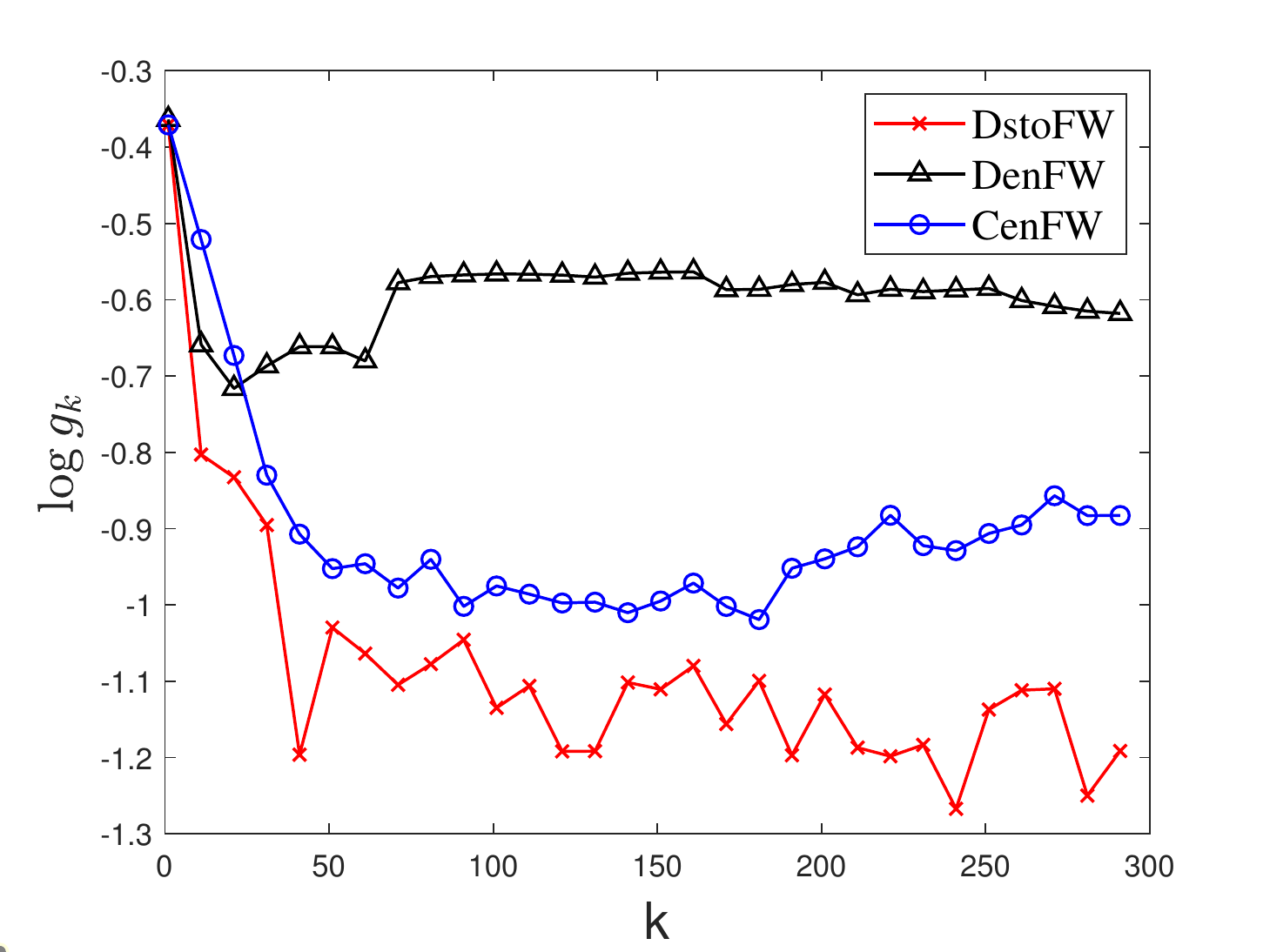}
		\label{w8a_fwgap_noncon}	
	}
	\subfigure[covtype dataset ]{
		\includegraphics[width=5.5cm]{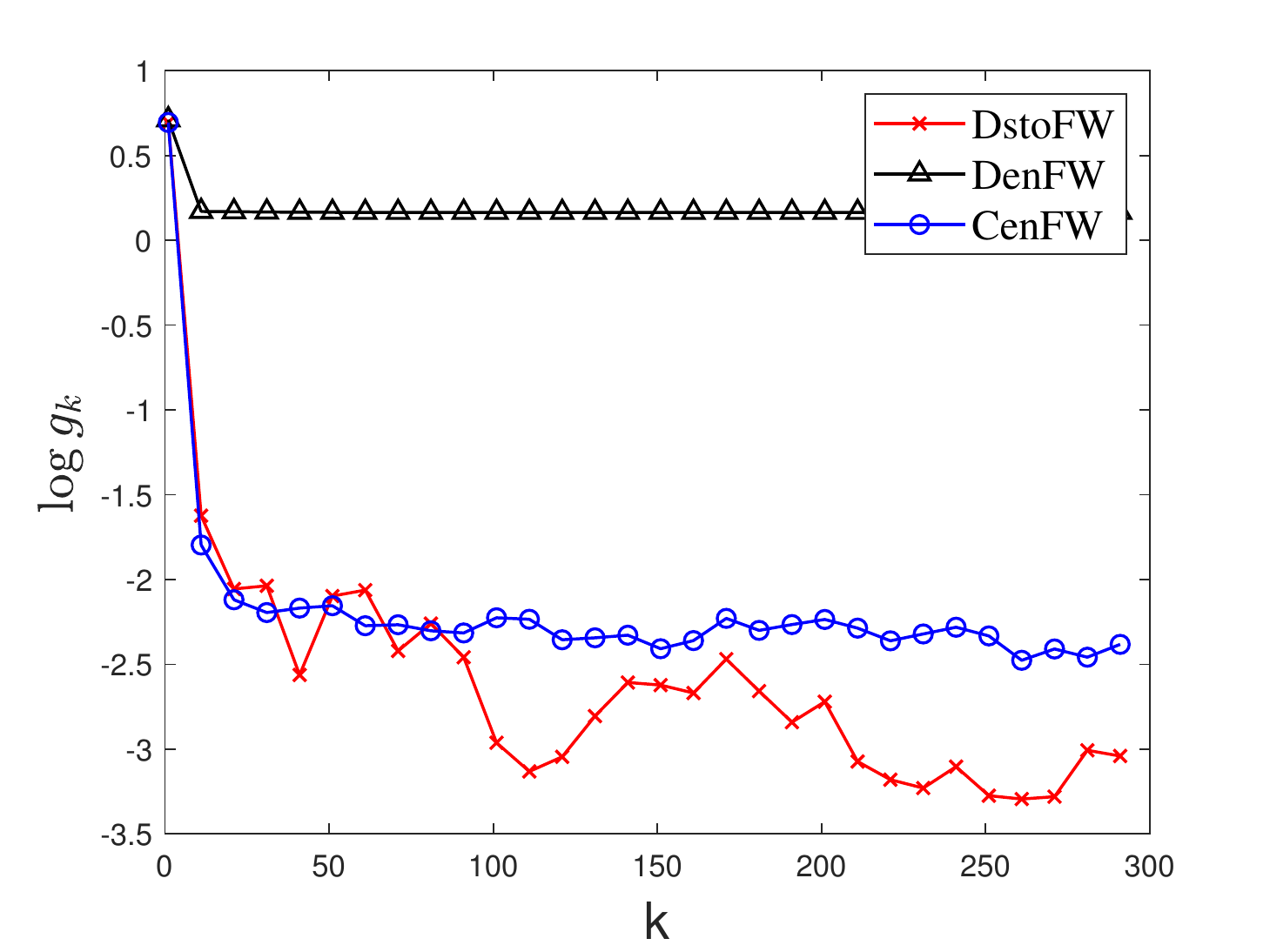}
		\label{cov_fwgap_noncon}	
	}
	\caption{ Different algorithms for non-convex optimization}
	\label{fwgap_noncon_fig}
\end{figure*}
\par (1) Convex objective function:  Using the convex logistic regression learning model in \cite{log_simu}, the convex local objective function $f_i$ in \eqref{opti_p} is
\begin{align}\label{convex_subp}
	f_i(x)=\frac{1}{n_i}\sum_{j=1}^{n_i} \ln(1+\exp(-l_{i,j}\langle a_{i,j},x \rangle)),
\end{align}
where $a_{i,j}\in \mathbb{R}^d$ is the feature vector of the $j$th local sample of agent $i$, $l_{i,j} \in \{-1,1\}$ is the classification value of the $j$th local sample of agent $i$ and $\{a_{i,j},l_{i,j}\}_{j=1}^{n_i}$ denotes the set of local training samples of agent $i$.
%

 In our simulation, for convenience, each agent in the network owns the same number of local sample data, i.e. $n_i=n_j$ for $i,j\in \{1,\cdots, 10\}$. Note that CenFW is a centralized algorithm. There is only one processing node ($m=1$) to deal with all samples of the dataset so that $\bar{x}^k=x^k$. Let $t\triangleq k//q$, where the notation $//$ denotes integer division operation. 
 According to \cite{FW-TAC}, \cite{pmlr-v97-yurtsever19b} and the proposed algorithm, the step-sizes of DenFW, CenFW and DstoFW are $2/(k+1)$, $2/(2^t+k+1)$ and  $2/(k+1)$, respectively.
  The simulation results of DenFW, CenFW and DstoFW algorithms for different datasets are shown in Fig. \ref{fwgap_fig}. It shows that for convex optimization, the DstoFW algorithm converges faster than the distributed DenFW and the centralized CenFW algorithms over different datasets.  

\par (2) Non-convex objective function:  Using the non-convex logistic regression learning model in \cite{Huang2019}, the non-convex local objective function $f_i$ in \eqref{opti_p} is
\begin{align}
	f_i(x)&=\frac{1}{n_i} \sum_{j=1}^{n_i} \frac{1}{1+\exp(l_{i,j}\langle a_{i,j},x\rangle)},
\end{align}
where $a_{i,j}$ and $l_{i,j}$ are same as those in \eqref{convex_subp}. The step-sizes of DenFW, CenFW and DstoFW are $1/\sqrt{k}$,  $1/\sqrt{K}$ and $1/\sqrt{k}$, respectively. The simulation results of DenFW, CenFW and DstoFW algorithms are shown in Fig. \ref{fwgap_noncon_fig}, where $g_k$ is the FW-gap defined in \eqref{fwgapg}. It is seen that for non-convex optimization, the proposed distributed stochastic DstoFW algorithm owns a better convergence performance than DenFW and CenFW algorithms. The trajectories of $g_k$ converging to zero implies that the generated variables of all algorithms converge to stationary points of the non-convex optimization. 
\begin{table}[]
	\centering
	\caption{Execution times of distributed algorithms}
	\label{exec_tab}
	\begin{tabular}{|c|c|c|c|}
		\hline
		\multicolumn{1}{|l|}{datasets} & algorithms & convex(s) & non-convex(s) \\ \hline
		& DenFW                                                    & 55.467    & 116.776       \\ \cline{2-4}
		\multirow{-2}{*}{a9a}          & DstoFW                                                   & 10.263    & 31.191        \\ \hline
		& DenFW                                                    & 319.006   & 119.014       \\ \cline{2-4}
		\multirow{-2}{*}{w8a}          & DstoFW                                                   & 47.281    & 24.288        \\ \hline
		& DenFW                                                    & 1206.455  & 1084.169      \\ \cline{2-4}
		\multirow{-2}{*}{covtype}      & DstoFW                                                   & 104.154   & 47.611        \\ \hline
	\end{tabular}
	
\end{table}
 \par To further test the computation efficiency, we provide the execution times of distributed algorithms DenFW and DstoFW for solving the convex and non-convex optimization \eqref{opti_p} in Table \ref{exec_tab}. All distributed algorithms are applied over the multi-agent network in Fig. \ref{topo_fig} and implemented in Python over the MPI distributed model, which is a high-performance message passing interface. This multi-processor environment is based on one computer with a Core(TM) I5-8250U CPU, 1.6GHz. By comparison, we observe that the proposed stochastic DstoFW algorithm costs less execution time than DenFW for both convex and non-convex optimization, which benefits from the fewer communication rounds with neighbors and the lower computational cost of stochastic gradients. In addition, with the size of dataset increases, the superiority of the execution time of DstoFW algorithm becomes more significant. It verifies the effectiveness of the introduced stochastic technique for large-scale problems.

\section{Conclusion}\label{conclusion}
Focusing on large-scale constrained finite-sum optimization, this paper provided one distributed stochastic Frank-Wolfe algorithm. By combining gradient tracking and variance reduction technique, the proposed algorithm deals with the local and global variances caused by random sampling and distributed data over multi-agent networks. For convex and non-convex optimization problems, the proposed stochastic algorithm converges at the rate of $\mathcal{O}(1/k)$ and $\mathcal{O}(\frac{1}{K^{\min\{\alpha,1-\alpha\}}})$, respectively. By comparative simulations, the proposed algorithm shows an excellent convergence performance and costs less execution time than the distributed algorithm DenFW. One future research direction is to design a distributed stochastic projection-free algorithm with a faster convergence rate by utilizing momentum or Nesterov's accelerated technique.
\section{appendix}\label{append}
\subsection{Proof of Lemma \ref{maxbarxisub}}\label{proof_lem}
\begin{proof}
	For convenience, we drop the dependence of $\alpha$ in $k_0(\alpha)$. In addition, note that $\max_{i\in[m]}\|\overline{x_i^k}-\bar{x}^k\|\leq \sqrt{\sum_{i=1}^m \|\overline{x_i^k}-\bar{x}^k\|^2}$. To prove Lemma \ref{maxbarxisub}, we show that for all $k\geq 1$,
	\begin{align}\label{in_prof}
		\sqrt{\sum_{i=1}^m \|\overline{x_i^k}-\bar{x}^k\|^2}\leq C_p \gamma_k, \ C_p=k_0^{\alpha} \sqrt{m}\bar{\rho},
	\end{align}
	where $\gamma_k=\frac{1}{k^{\alpha}}$.
	We prove this lemma using the induction hypothesis. 
	\par Because $\overline{x_i^k}, \ \bar{x}^k\in \Omega$ and the diameter of $\Omega$ is bounded by $\bar{\rho}$, \eqref{in_prof} holds for $1\leq k\leq k_0$.
	\par Next, suppose that $\sqrt{\sum_{i=1}^m \|\overline{x_i^k}-\bar{x}^k\|^2}\leq C_p\gamma_{k}$ holds for some $k\geq k_0$. We show that $\sqrt{\sum_{i=1}^m \|\overline{x_i^{k+1}}-\bar{x}^{k+1}\|^2} \leq \gamma_{k+1}C_p$ holds. 
	\par Recall that $x_i^{k+1}=(1-\gamma_k)\overline{x_i^k}+\gamma_k u_i^k$. Let $\bar{u}^k\triangleq \frac{1}{m}\sum_{i=1}^m u_i^k$, then
	\begin{align}\label{barxk} \bar{x}^{k+1}=(1-\gamma_k)\bar{x}^k+\gamma_k \bar{u}^k.
	\end{align}
	By \eqref{barxk} and Proposition \ref{barsumpro}, we obtain
	\begin{align}\label{sumkplus}
		&\sum_{i=1}^m \|\overline{x_i^{k+1}}-\bar{x}^{k+1}\|^2\notag\\
		\leq &|\lambda_2(W)|^2 \sum_{i=1}^m \|(1-\gamma_k)(\overline{x_i^k}-\bar{x}^k)+\gamma_k(u_i^k-\bar{u}^k)\|^2.
	\end{align}
	The right-hand side of \eqref{sumkplus} satisfies
	\begin{align}\label{sumxx}
		&\sum_{i=1}^m \|(1-\gamma_k)(\overline{x_i^k}-\bar{x}^k)+\gamma_k(u_i^k-\bar{u}^k)\|^2\notag\\
		\leq & \sum_{i=1}^m \Big(\|\overline{x_i^k}-\bar{x}^k\|^2+\gamma_k^2\bar{\rho}^2+2\bar{\rho}\gamma_k\|\overline{x_i^k}-\bar{x}^k\|\Big)\notag\\
		\leq& \gamma_k^2(C_p^2+m\bar{\rho}^2)+2\bar{\rho}\gamma_k \sqrt{m}\sqrt{\sum_{i=1}^m\|\overline{x_i^k}-\bar{x}^k\|^2}\notag\\
		= &\gamma_k^2(C_p+\sqrt{m}\bar{\rho})^2\notag\\
		\leq &(\frac{(k_0^{\alpha}+1)}{k_0^{\alpha}k^{\alpha}}C_p)^2,
	\end{align}
	where the second inequality holds due to $\sum_{i=1}^m |c_i| \leq \sqrt{m}\sqrt{\sum_{i=1}^m c_i^2}$ and the induction hypothesis. What's more, it follows from \eqref{k0_def} that for all $k\geq k_0$,
	\begin{align}\label{lambdaww}
		|\lambda_2(W)|\frac{(k_0^{\alpha}+1)}{k_0^{\alpha}k^{\alpha}}\leq & (\frac{k_0}{k_0+1})^{\alpha}\frac{k_0^{\alpha}}{k_0^{\alpha}+1} \frac{(k_0^{\alpha}+1)}{k_0^{\alpha}k^{\alpha}}\notag\\
		\leq& \frac{k^{\alpha}}{(k+1)^{\alpha}}\frac{1}{k^{\alpha}}\notag\\
		= & \frac{1}{(k+1)^{\alpha}},
	\end{align}
	where we use the monotonically increase property of function $g(v) = (v/(1 + v))^{\alpha}$ with respect to $v$ in the second inequality. 
	Substituting \eqref{sumxx} and \eqref{lambdaww} to \eqref{sumkplus}, we obtain the desired result
	\begin{align*}
		\sqrt{\sum_{i=1}^m \|\overline{x_i^{k+1}}-\bar{x}^{k+1}\|^2} \leq \frac{1}{(k+1)^{\alpha}}C_p=\gamma_{k+1}C_p.
	\end{align*}
\end{proof}
\subsection{Proof of Proposition \ref{usubx_pro}}\label{proof_pro}
\begin{proof}
		1) At first, taking the average of $d_i^{k}$ in \eqref{line_t}, $\bar{d}^{k}=\bar{g}^k$ holds due to the double stochasticity of $W$. Then, taking the average of $g_i^k$ in \eqref{line_g},
	\begin{align}\label{bardgv}
		\bar{d}^k=\bar{g}^k&=\frac{1}{m}\sum_{i=1}^m(d_i^{k-1}+v_i^k-v_i^{k-1})\notag\\
		&=\bar{d}^{k-1}+\frac{1}{m}\sum_{i=1}^m (v_i^k-v_i^{k-1})\notag\\
		&=\bar{d}^1+\frac{1}{m}\sum_{i=1}^m \sum_{\tau=2}^k (v_i^{\tau}-v_i^{\tau-1})\notag\\
		&=\bar{v}^k,
	\end{align}
	where the second-to-last equation holds by induction, and the last equality holds due to $d_i^1=g_i^1=v_i^1$ for all $i \in [m]$.
	\par 2)
Because of $\bar{d}^k=\bar{v}^k$ in \eqref{bardgv}, 
	\begin{align*}
		&\|\bar{d}^k-\overline{\nabla_k F}\|\\
		=&\|\bar{v}^k-\overline{\nabla_k F}\|\\
		=&\|\frac{1}{m}\sum_{i=1}^m v_i^k-\frac{1}{m}\sum_{i=1}^m \nabla f_i({x_i^k})\|\\
		\leq & \frac{1}{m}\sum_{i=1}^m \|v_i^k-\nabla f_i({x_i^k})\|.
	\end{align*}
	We take expectation of the above inequality and get
	\begin{align}\label{bardnablaF}
		\mathbb{E}[\|\bar{d}^k-\overline{\nabla_k F}\|]\leq \frac{1}{m}\sum_{i=1}^m \mathbb{E}[\|v_i^k-\nabla f_i({x_i^k})\|].
	\end{align}
	{If ${\rm mod}(k,q)=0$ (i.e., there exists $\tilde{n}_k\in \mathbb{Z}^+$ such that $k=\tilde{n}_kq$),  $v_i^{k}=\nabla f_i({x_{i}^{k}})$ holds by \eqref{mod_up} and $\mathbb{E}[\|\bar{d}^k-\overline{\nabla_k F}\|]=0$ holds in \eqref{bardnablaF}.\\
		If $(\tilde{n}_k-1)q+1\leq k\leq \tilde{n}_kq-1$, it follows from \cite[Lemma 1]{pmlr-v97-yurtsever19b} that
		\begin{align}\label{nabfv}
			&\mathbb{E}_k[\|\nabla f_i({x_{i}^{k}})-v_i^{k}\|^2]\notag\\
			\leq& \frac{L^2}{|S^{k-1}|}\|{x_{i}^{k}}-{x_i^{k-1}}\|^2+\|\nabla f_i({x_i^{k-1}})-v_i^{k-1}\|^2.
		\end{align}
		Recall that $\max_{i\in[m]} \|\overline{x_i^k}-\bar{x}^k\|\leq C_p\gamma_k$ in \eqref{xsubbax}, and 	
		$\|x_i^{k}-\bar{x}^{k}\|\leq (2\bar{\rho}+3C_p)2^{\alpha}\gamma_{k}$ in \eqref{xixsub}.
Combining \eqref{xsubbax} and \eqref{xixsub}, we have
		\begin{align}\label{xikbar}
			\|\overline{x_i^{k}}-x_i^{k}\| \leq& \|\overline{x_i^{k}}-\bar{x}^{k}\|+\|\bar{x}^{k}-x_i^{k}\|\notag\\
			{\leq} & C_p\gamma_{k}+ (2\bar{\rho}+3C_p)2^{\alpha}\gamma_{k}\notag\\
			\overset{\alpha\leq 1}{\leq} & (4\bar{\rho}+7C_p)\gamma_{k}.
		\end{align}
	}
	
		Then, the term $\|x_i^{k}-x_i^{k-1}\|$ in the right-hand side of \eqref{nabfv} satisfies
		\begin{align}\label{xksubkk}
			\|x_i^{k}-x_i^{k-1}\|\leq& \|\overline{x_i^{k-1}}-x_i^{k-1}\|+\gamma_{k-1}\|u_i^{k-1}-\overline{x_i^{k-1}}\|\notag\\
			\leq& (4\bar{\rho}+7C_p)\gamma_{k-1}+\gamma_{k-1}\bar{\rho} \notag\\
			\leq& (5\bar{\rho}+7C_p)\gamma_{k-1}.
		\end{align}
		\par Hence, \eqref{nabfv} satisfies
		\begin{align}\label{nablafvi}
			&\mathbb{E}_k[\|\nabla f_i({x_{i}^{k}})-v_i^{k}\|^2]\notag\\
			\leq &\frac{L^2}{|S^{k-1}|}(5\bar{\rho}+7C_p)^2\gamma_{k-1}^2+\|\nabla f_i({x_i^{k-1}})-v_i^{k-1}\|^2.
		\end{align}
		By taking the total expectation on both sides of \eqref{nablafvi} and telescoping over $k$, we have
		\begin{align}\label{nablafv}
			\mathbb{E}[\|\nabla f_i({x_{i}^k})-v_i^{k}\|^2]\leq& \mathbb{E}[\|\nabla f_i({x_i^{\tilde{n}_kq}})-v_i^{\tilde{n}_kq}\|^2]\notag\\
			&+\sum_{t=(\tilde{n}_k-1)q+1}^{k-1} \frac{\gamma_{t}^2 L^2 (5\bar{\rho}+7C_p)^2}{|S^{t}|}\notag\\
			= &L^2 (5\bar{\rho}\!+\!7C_p)^2 \!\sum_{t=(\tilde{n}_k-1)q+1}^{k-1} \!\frac{\gamma_{t}^2 }{|S^{t}|},
		\end{align}
		where $\nabla f_i({x_i^{\tilde{n}_kq}})=v_i^{\tilde{n}_kq}$ holds in the first inequality due to \eqref{mod_up}.
		Because $(\mathbb{E}X)^2\leq \mathbb{E}X^2$ holds for random variable $X$, $\mathbb{E}[\|\nabla f_i({x_{i}^k})-v_i^{k}\|]\leq \sqrt{\mathbb{E}[\|\nabla f_i({x_{i}^k})-v_i^{k}\|^2]}$. Substituting \eqref{nablafv} into \eqref{bardnablaF} implies
		{
			\begin{align}\label{bark2}
				\mathbb{E}[\|\bar{d}^k-\overline{\nabla_k F}\|]&\leq \frac{1}{m}\sum_{i=1}^m L (5\bar{\rho}+7C_p) \sqrt{\sum_{t=(\tilde{n}_k-1)q+1}^{k-1} \frac{\gamma_{t}^2 }{|S^{t}|}}\notag\\
				&\leq \frac{1}{m}\sum_{i=1}^m L (5\bar{\rho}+7C_p) \sum_{t=(\tilde{n}_k-1)q+1}^{k-1} \sqrt{\frac{\gamma_{t}^2 }{|S^{t}|}}\notag\\
				&\leq L (5\bar{\rho}+7C_p) \frac{q}{\sqrt{|S^{\tilde{n}_kq-1}|}} \gamma_{k-1} \notag \\
				&\leq L (5\bar{\rho}+7C_p)2^{\alpha}   \gamma_{k},
			\end{align}
			where the fourth inequality holds due to Sampling rule 1 and the last inequality holds because of $q=\sqrt{|S^{\tilde{n}_kq-1}|}$ and \eqref{gammaksubk}.}

	\par Combining the above two cases $k=\tilde{n}_k q$ and $(\tilde{n}_k-1)q+1\leq k\leq \tilde{n}_kq-1$, we obtain the desired result
	\begin{align}\label{dnablaF}
		\mathbb{E}[\|\bar{d}^k-\overline{\nabla_k F}\|]\leq L (5\bar{\rho}+7C_p)2^{\alpha} \gamma_k.
	\end{align}
	\par 3) Using \eqref{nablafv}, Sampling rule 1 and $\gamma_k<1$, we obtain
	\begin{align}\label{v1}
		\mathbb{E}[\|\nabla f_i({x_{i}^k})-v_i^{k}\|^2]\leq  L^2 (5\bar{\rho}+7C_p)^2.
	\end{align}
	Then, by Jensen's inequality,
	\begin{align}\label{vsq}
		\mathbb{E}[\|v_i^k\|^2]\leq& \mathbb{E}[2\|\nabla f_i({x_{i}^k})-v_i^{k}\|^2]+\mathbb{E}[2\|\nabla f_i(x_i^k)\|^2]\notag\\
		\leq &2 L^2 (5\bar{\rho}+7C_p)^2+2\mathbb{E}[\|\nabla f_i(x_i^k)\|^2]\notag\\
		\leq & 2L^2 (5\bar{\rho}+7C_p)^2+ 2C^2\triangleq \mathbb{C},
	\end{align}
	where the second inequality holds due to \eqref{v1} and the last inequality holds due to Fact 1. 
	\par Then, we prove \eqref{dibard} using the induction hypothesis. For convenience, we drop the dependence of $\alpha$ in $k_0(\alpha)$ in \eqref{k0_def}. At first, we prove \eqref{dibard} holds for $k\leq k_0$.
	It follows from \eqref{line_g} that
	\begin{align}\label{k0}
		&\mathbb{E}\Big[\sum_{i=1}^m \|d_i^k-\bar{v}^k\|^2\Big]\notag\\
		=&\mathbb{E}\Big[\sum_{i=1}^m \|g_i^{k+1}+v_i^k-v_i^{k+1}-\bar{v}^k\|^2\Big]\notag\\
		\leq & 4\sum_{i=1}^m\mathbb{E}[\|g_i^{k+1}\|^2]+4\sum_{i=1}^m\mathbb{E}[\|v_i^k\|^2]+4\sum_{i=1}^m\mathbb{E}[\|v_i^{k+1}\|^2]\notag\\
		&+4\sum_{i=1}^m\mathbb{E}\Big[\Big\|\frac{1}{m}\sum_{j=1}^m v_j^k\Big\|^2\Big]\notag\\
		\leq & 4\sum_{i=1}^m\mathbb{E}[\|g_i^{k+1}\|^2]+12m\mathbb{C},
	\end{align}
	where the last inequality holds due to \eqref{vsq}.
	In addition, the first term of right-hand side of \eqref{k0} satisfies
	\begin{align}\label{gnorm}
		&\mathbb{E}[\|g_i^{k+1}\|^2]\notag\\
		=&\mathbb{E}\Big[\Big\|\sum_{j=1}^m W_{ij}g_j^k+v_i^{k+1}-v_i^k\Big\|^2\Big]\notag\\
		\leq & 3\mathbb{E}\Big[\Big\|\sum_{j=1}^m W_{ij}g_j^k\Big\|^2+\|v_i^{k+1}\|^2+\|v_i^k\|^2\Big]\notag\\
		\leq &3\mathbb{E}\Big[\Big\|\sum_{j=1}^m W_{ij}g_j^k\Big\|^2\Big]+6\mathbb{C}\notag\\
		\leq & 3\sum_{j=1}^m W_{ij} \mathbb{E}[\|g_j^k\|^2]+6\mathbb{C}\notag\\
		\leq &3^k C^2+6\sum_{s=0}^{k-1}3^s\mathbb{C}\notag\\
		\leq &3^k C^2+\frac{6 \mathbb{C} }{1-3}(1-3^k)\notag\\
		\leq &3^{k_0} C^2+3 \mathbb{C}(3^{k_0}-1) ,
	\end{align}
	where the second inequality holds due to \eqref{vsq}, the third inequality holds by the fact that $W$ is doubly stochastic and the operator $\|\cdot\|^2$ is convex, and the fourth inequality holds due to $\|g_j(1)\|^2=\|\nabla f_j(x_j(1))\|^2\leq C^2, \ \forall j\in [m]$.
	Then, substituting \eqref{gnorm} into \eqref{k0} yields
	\begin{align*}
		&\mathbb{E}\Big[\sum_{i=1}^m \|d_i^k-\bar{v}^k\|^2\Big]\\
		\leq & 4m3^{k_0} C^2+12m3^{k_0}\mathbb{C}-12m \mathbb{C}+12m\mathbb{C}\\
		= & 4m 3^{k_0}C^2+12m3^{k_0}\mathbb{C}.
	\end{align*}
	In addition, $\bar{d}^k=\bar{v}^k$  holds in \eqref{bardgv}.
	Hence, for $1\leq k\leq k_0$, $\mathbb{E}[\sum_{i=1}^m \|d_i^k-\bar{d}^k\|^2]\leq  C_g^2\gamma_k^2$, where $\gamma_k=\frac{1}{k^{\alpha}}$ and $C_g=k_0^{\alpha}\Big(2mL(5\bar{\rho}+7C_p)+\big(4m3^{k_0}C^2+12m3^{k_0}\mathbb{C}\big)^{1/2}\Big)$.
	\par Then, we assume that $\mathbb{E}[\sum_{i=1}^m \|d_i^k-\bar{d}^k\|^2]\leq  C_g^2\gamma_k^2$ holds for some $k\geq k_0$. We will show that $\mathbb{E}[\sum_{i=1}^m \|d_i^{k+1}-\bar{d}^{k+1}\|^2]\leq  C_g^2\gamma_{k+1}^2$ holds. Recall that $d_i^{k+1}=\sum_{j=1}^m W_{ij} g_j^{k+1}$ and $\bar{d}^{k+1}=\frac{1}{m}\sum_{i=1}^m d_i^{k+1}$. By Proposition \ref{barsumpro},
	\begin{align}\label{sqrtdd}
		\sqrt{\sum_{i=1}^m\|d_i^{k+1}-\bar{d}^{k+1}\|^2}\leq\! |\lambda_2(W)|\sqrt{\sum_{i=1}^m \|g_i^{k+1}-\bar{d}^{k+1}\|^2}.
	\end{align}
	Let $\sigma_i^{k}$ denote $(v_i^{k+1}-v_i^k)$ and $\bar{\sigma}^{k}$ denote $(\bar{v}^{k+1}-\bar{v}^k)$. Then, $g_i^{k+1}=d_i^k+\sigma_i^{k}$ and $\bar{d}^{k+1}=\bar{d}^k+\bar{\sigma}^{k}$ hold by \eqref{line_g} and \eqref{line_t}. 
	%
	In addition, utilizing the $L$-smoothness of cost function  and \eqref{xksubkk} , we obtain
	\begin{align*}
		\|\sigma_i^{k}\|\leq &\frac{1}{|S^k|}\sum_{j\in S^k} \|\nabla f_{i,j}(x_i^{k+1})-\nabla f_{i,j}(x_i^k)\|\\
		\leq & L (5\bar{\rho}+7C_p)\gamma_k.
	\end{align*}
	Making use of the triangle inequality, we also have
	\begin{align}\label{sigmat}
		\|\sigma_i^{k}-\bar{\sigma}^{k}\|\leq& (1-\frac{1}{m})\|\sigma_i^{k}\|+\frac{1}{m}\sum_{j\neq i}\|\sigma_j^{k}\|\notag\\
		\leq & 2 L (5\bar{\rho}+7C_p)\gamma_k.
	\end{align}
	Then, with \eqref{sqrtdd},
	\begin{align*}
		&\mathbb{E}\Big[\sum_{i=1}^m\|d_i^{k+1}-\bar{d}^{k+1}\|^2\Big]\notag\\
		\leq& |\lambda_2(W)|^2 \mathbb{E}\Big[\sum_{i=1}^m \|g_i^{k+1}-\bar{d}^{k+1}\|^2\Big]\notag\\
		= & |\lambda_2(W)|^2 \mathbb{E}\Big[\sum_{i=1}^m \|d_i^k+\sigma_i^{k}-(\bar{d}^k+\bar{\sigma}^{k})\|^2\Big]\notag\\
		\leq & |\lambda_2(W)|^2 \mathbb{E}\Big[\sum_{i=1}^m \big( \|d_i^k-\bar{d}^k\|^2+\|\sigma_i^{k}-\bar{\sigma}^{k}\|^2\\
		&+2\|d_i^k-\bar{d}^k\|\|\sigma_i^{k}-\bar{\sigma}^{k}\|\big)\Big]\notag\\
		\overset{\eqref{sigmat}}{\leq} & |\lambda_2(W)|^2\Big( C_g^2{\gamma_k}^2+m(2 L (5\bar{\rho}+7C_p){\gamma_k})^2\\
		&+2\sum_{i=1}^m\mathbb{E}[\|d_i^k-\bar{d}^k\|\|\sigma_i^k-\bar{\sigma}^k\|]\Big)\\
		\leq &|\lambda_2(W)|^2\Big(C_g^2{\gamma_k}^2+m(2 L (5\bar{\rho}+7C_p){\gamma_k})^2\\
		&+2\sum_{i=1}^m(\mathbb{E}[\|d_i^k-\bar{d}^k\|^2])^{1/2}(\mathbb{E}[\|\sigma_i^k-\bar{\sigma}^k\|^2])^{1/2}\Big)\\
		\leq &|\lambda_2(W)|^2\Big(C_g^2{\gamma_k}^2+m(2 L (5\bar{\rho}+7C_p){\gamma_k})^2\\
		&+4 L (5\bar{\rho}+7C_p){\gamma_k}\sum_{i=1}^m\sqrt{\mathbb{E}[\sum_{i=1}^m\|d_i^k-\bar{d}^k\|^2]}\Big),
	\end{align*}
where in the fourth inequality we use  the H$\ddot{o}$lder’s inequality $\mathbb{E}[|XY|]\leq (\mathbb{E}[|X|^2])^{1/2}(\mathbb{E}[|Y|^2])^{1/2}$. What's more, because of $m\geq 1$,
	\begin{align*}
		&\mathbb{E}\Big[\sum_{i=1}^m\|d_i^{k+1}-\bar{d}^{k+1}\|^2\Big]\\
	{\leq } &|\lambda_2(W)|^2\Big(C_g^2{\gamma_k}^2+m^2(2 L (5\bar{\rho}+7C_p){\gamma_k})^2\\
		&+4 m L (5\bar{\rho}+7C_p){\gamma_k}C_g\gamma_k\Big)\\
		=&|\lambda_2(W)|^2{\gamma_k}^2(C_g+2mL(5\bar{\rho}+7C_p))^2\\
		\leq &|\lambda_2(W)|^2{\gamma_k}^2(C_g+C_gk_0^{-\alpha})^2\\
		\leq& |\lambda_2(W)|^2\Big(\frac{C_g(k_0^{\alpha}+1)}{k_0^{\alpha}(k)^{\alpha}}\Big)^2.
	\end{align*}
	Finally, because \eqref{lambdaww} holds for $k\geq k_0$, 
	\begin{align*}
		\mathbb{E}\Big[\sum_{i=1}^m\|d_i^{k+1}-\bar{d}^{k+1}\|^2\Big]
		\leq & C_g^2(\frac{1}{(k+1)^{\alpha}})^2=C_g^2\gamma_{k+1}^2.
	\end{align*}
\end{proof}
\bibliographystyle{ieeetran}
\bibliography{refer}
\end{document}